\newtheorem{theorem}{Theorem}[section]
\newtheorem{lemma}[theorem]{Lemma}
\newtheorem{corollary}[theorem]{Corollary}
\newtheorem{proposition}[theorem]{Proposition}
\theoremstyle{definition}
\newtheorem{definition}[theorem]{Definition}
\newtheorem{example}[theorem]{Example}
\newtheorem{remark}[theorem]{Remark}
\numberwithin{equation}{section}
\newcommand{\D}{{\mathbb D}}
\newcommand{\cL}{{\mathcal L}}
\newcommand{\cE}{{\mathcal E}}
\newcommand{\cS}{{\mathcal S}}
\newcommand{\cX}{{\mathcal X}}
\newcommand{\cY}{{\mathcal Y}}
\newcommand{\cG}{{\mathcal G}}
\newcommand{\cU}{{\mathcal U}}
\newcommand{\cO}{{\mathcal O}}
\newcommand{\cM}{{\mathcal M}}
\newcommand{\balpha}{{\boldsymbol \alpha}}
\newcommand{\bbeta}{{\boldsymbol \beta}}
\newcommand{\bgamma}{{\boldsymbol \gamma}}
\newcommand{\bS}{{\mathbf S}}
\newcommand{\sbm}[1]{\left[\begin{smallmatrix} #1
                \end{smallmatrix}\right]}
\begin{document}
\title[weighted Hardy multiplier classes]
{Contractive multipliers from Hardy space to weighted Hardy space}

\author{Joseph A. Ball}
\address{Department of Mathematics,
Virginia Tech, Blacksburg, VA 24061-0123, USA}
\author{Vladimir Bolotnikov}
\address{Department of Mathematics,
The College of William and Mary,
Williamsburg VA 23187-8795, USA}

\subjclass{30E05, 47A57, 46E22}
\keywords{contractive multiplier}

\begin{abstract}
    It is shown how any contractive multiplier from the Hardy space to 
    a weighted Hardy space $H^{2}_{\bbeta}$ can be factored as a fixed factor 
    composed with the classical Schur multiplier (contractive 
    multiplier between Hardy spaces). The result is applied to get 
    results on interpolation for a Hardy-to-weighted-Hardy contractive 
    multiplier class.
\end{abstract}
\maketitle

\section{Introduction}   \label{S:intro}

Given a sequence $\bbeta=\{\beta_j\}_{j\ge 0}$ of positive numbers such that $\beta_0=1$ and 
$\liminf\beta_j^{\frac{1}{j}}\ge 1$, the {\em weighted Hardy space} $H^2_\bbeta$
is defined as the set of all functions analytic on the  open unit 
disk $\D$ and with finite norm 
$\|f\|_{H^2_{\bbeta}}$ given by
$$
\|f\|_{H^2_{\bbeta}}^2=\sum_{j=0}^\infty \beta_j |f_j|^2\quad\mbox{if}\quad
f(z)=\sum_{j=0}^\infty  f_jz^j.
$$
Polynomials are dense in  $H^2_\bbeta$ and  the monomials $\{z^k\}_{k\ge 0}$  form an orthogonal set
uniquely defining the weight sequence $\bbeta$  by $\beta_j=\|z^j\|^2$ for $j\ge 0$. The space $H^2_\bbeta$
can be alternatively characterized as the reproducing kernel Hilbert space with reproducing kernel
\begin{equation}
k_{\bbeta}(z,\overline{\zeta})=\sum_{j=0}^\infty \frac{z^j\overline{\zeta}^j}{\beta_j}.
\label{0.1}
\end{equation}
For a Hilbert (coefficient) space $\cY$, we denote by $H^2_{\bbeta}(\cY)$ the  reproducing kernel 
Hilbert space
with  reproducing kernel $k_{\bbeta}(z,\overline{\zeta})I_{\cY}$ which can be characterized explicitly as 
follows:
\begin{equation}
H^2_{\bbeta}(\cY)=\left\{f(z)={\displaystyle\sum_{k\ge 0}f_kz^k}: \;
\|f\|^2_{H^2_{\bbeta}(\cY)}:={\displaystyle \sum_{k\ge 0}\beta_k \cdot 
\|f_k\|_{\cY}^2}<\infty\right\}.
\label{0.2}
\end{equation}
We will write ${\bf 1}$ rather than $\bbeta$ if $\beta_j=1$ for all $j\ge 0$. Observe that 
$H^2_{\bf 1}(\cY)$ is the classical vector Hardy space $H^2(\cY)$ 
of the unit  disk (with reproducing kernel $k_{\bf 1}(z,\overline{\zeta})=(1-z\overline{\zeta})^{-1}\cdot 
I_{\cY}$); the choice $\beta_j= \frac{j!(n-1)!}{(j+n-1)!}$ yield the standard weighted Bergman space 
$A^2_{n}(\cY)$ ($n\ge 1$) and in particular, the classical Bergman 
space $A^2_{2}(\cY)$ in case $n=2$.  A general reference for such 
spaces and the associated weighted shift operators is the article of 
Shields \cite{Shields}.

\smallskip

For $\cU$ and $\cY$ any pair of Hilbert spaces, we use the notation
$\cL(\cU, \cY)$ to denote the space of bounded, linear operators
from $\cU$ to $\cY$, shortening  
$\cL(\cX, \cX)$ to $\cL(\cX)$.
\begin{definition}
An $\cL(\cU,\cY)$-valued function $S$ defined on $\D$ is called a {\em contractive multiplier} 
from $H^2_{\balpha}(\cU)$ to $H^2_{\bbeta}(\cY)$, denoted as $S \in 
\cS_{\balpha \to \bbeta}(\cU, \cY)$, if the multiplication 
operator  $M_S \colon f\to Sf$ 
is a contraction from $H^2_{\balpha}(\cU)$ to $H^2_{\bbeta}(\cY)$.
\label{D:0.1}
\end{definition}
The  latter means that the operator $I-M_SM_S^*$
(with $M_{S}$ considered as acting from the Hardy space $H_\balpha^{2}(\cU)$ 
into $H^{2}_{\bbeta}(\cY)$)  is positive semidefinite; this in turn is equivalent to the kernel
\begin{equation}
K_S(z,\overline{\zeta}):=k_{\bbeta}(z,\overline{\zeta})I_{\cY}-S(z)S(\zeta)^* \cdot 
k_{\balpha}(z, \overline{\zeta})
\label{0.5}
\end{equation}
being positive on $\D\times\D$, denoted symbolically as $K_S(z,\overline{\zeta})\succeq 0$. 
In case $\bbeta=\balpha={\bf 1}$,  the set of contractive multipliers from $H^2(\cU)$ to  
$H^2(\cY)$ is the classical Schur class $\cS(\cU,\cY)$ of functions analytic on $\D$ whose 
values are contractive operators from $\cU$ to $\cY$.  
Our main focus here will be on the 
intermediate case where $\balpha={\bf 1}$, and we will write $\cS_{\bbeta}(\cU, \cY)$ (rather 
than $\cS_{{\bf 1}\to\bbeta}(\cU, \cY)$) for the set of all contractive multipliers from 
$H^2(\cU)$ to $H^2_{\bbeta}(\cY)$.

\smallskip

In Section \ref{S:fact} below, for the case where the sequence 
$\bbeta$ is non-increasing, we show that elements of 
$\cS_{\bbeta}(\cU, \cY)$ can be factored as $S(z) = \Psi_{\bbeta}(z) 
{\mathbf S}(z)$ where $\Psi_{\bbeta}$ is a fixed factor, and where 
${\mathbf S}$ is in a classical Schur class (see Theorem 
\ref{T:HBSchur} below).  This enables us to obtain a complete 
solution of a general  (left-tangential with 
operator argument) interpolation problem in the class 
$\cS_{\bbeta}(\cU, \cY)$ by reducing the problem to a well understood interpolation 
problem for the function $\bS$ in the classical Schur class 
(see Theorems \ref{T:IPexistence} and \ref{T:1.2} below).  
In Section \ref{S:alpha} we give an example to illustrate how the 
results do not generalize to the more general Schur class 
$\cS_{\balpha \to \bbeta}$ in case $\balpha \ne {\mathbf 1}$.

\smallskip

There has been much interest of late in so-called Bergman inner 
functions, i.e., functions which map the  coefficient space $\cU$ 
isometrically onto a shift-invariant subspace ${\mathcal L}$ for the 
shift operator $S_{\bbeta} \colon f(z) \mapsto z f(z)$ on 
$H^{2}_{\bbeta}(\cU)$ (see \cite{HKZ, olieot, olaa}), especially for 
the case where $\beta_{j} = \frac{j! (n-1)!}{(j+n-1)!}$ yields one of 
the standard weighted Bergman spaces.  It is known that Bergman inner 
functions are contractive multipliers from the Hardy space $H^{2}$ to 
the Bergman space $H^{2}_{\bbeta}$. Hence there is a potential for 
the results of this paper to apply to Bergman inner functions as well.

\section{The Hardy-to-weighted Hardy contractive multiplier class}  
\label{S:fact}

Let us assume now that {\em the weighted sequence $\bbeta=\{\beta_j\}_{j\ge 0}$ is 
non-increasing}
and introduce the positive sequence $\bgamma=\{\gamma_j\}_{j\ge 0}$ by
\begin{equation}
\gamma_0=1,\quad 
\gamma_j=\left(\beta_j^{-1}-\beta_{j-1}^{-1}\right)^{-1}=\frac{\beta_j\beta_{j-1}}{\beta_{j-1}-\beta_j}
\quad (j\ge 1).
\label{defgamma}
\end{equation}
Since the sequence $\bgamma$ is positive, the kernel
\begin{equation}
\widetilde{k}_{\bbeta}(z,\overline{\zeta}):=(1-z\overline{\zeta})\cdot k_{\bbeta}(z,\overline{\zeta})=
\sum_{j=0}^{\infty}\gamma_{j}^{-1}z^j\overline{\zeta}^j=k_\bgamma(z,\overline{\zeta})
\label{0.5b}
\end{equation}
is positive (and analytic in $z$ and conjugate analytic in $\zeta$ 
for $z, \zeta$ in $\D$). We then define the operator-valued 
function  $\Psi_\bbeta: \, \D\to \cL(\ell_2(\cY),\cY)$ by 
\begin{equation}
\Psi_{\bbeta}(z): \; \{y_j\}_{j\ge 0}\;  \mapsto \; y_{0}+\sum_{j=1}^\infty
\sqrt{\beta_j^{-1}-\beta_{j-1}^{-1}} \; \cdot  y_j 
z^j=\sum_{j=0}^\infty \frac{y_j}{\sqrt{\gamma_j}} \, z^j.
\label{2.7}
\end{equation}  
The multiplication operator $M_{\Psi_{\bbeta}}$ is an isometry from $\ell_2(\cY)$
onto the weighted Hardy space $H^2_{\bgamma}(\cY)$ (in fact, $\Psi_{\bbeta}$ is a weighted 
$Z$-transform).
It is convenient to represent $\Psi_{\bbeta}(z)$  and the elements $ {\bf y}=\{y_j\}_{j\ge 
0}$ in $\ell_2(\cY)$ in the matrix  form
$$
\Psi_{\bbeta}(z)=\begin{bmatrix}I_{\cY} & {\displaystyle\frac{z}{\sqrt{\gamma_1}}I_{\cY}} &
 {\displaystyle\frac{z^2}{\sqrt{\gamma_2}}I_{\cY}} & \ldots &\end{bmatrix},\qquad
{\bf y}=\begin{bmatrix} y_0 \\ y_1 \\ \vdots \end{bmatrix}\in\ell_2(\cY).
$$
It is readily seen from \eqref{0.5b} that 
\begin{equation}
\Psi_{\bbeta}(z)\Psi_{\bbeta}(\zeta)^*=\widetilde{k}_{\bbeta}(z,\overline{\zeta})\cdot 
I_{\cY}=(1-z\overline{\zeta})\cdot k_{\bbeta}(z,\overline{\zeta})\cdot I_{\cY}.
\label{2.7a}
\end{equation}
\begin{theorem}  \label{T:HBSchur}
Let the weight sequence $\bbeta$ be non-increasing. The function $S$ is in the class 
 $\cS_\bbeta(\cU, \cY)$ if and only if  there is an $\bS$  in the Schur class 
$\cS(\cU, \ell_{2}(\cY))$ so that 
 \begin{equation}  \label{Schur-fact}
     S(z) = \Psi_{\bbeta}(z) \bS(z).
   \end{equation}
\end{theorem}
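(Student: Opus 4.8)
The plan is to prove the two implications separately: the ``if'' direction by a direct kernel computation, and the ``only if'' direction by a lurking-isometry (colligation) argument.

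For the easy (``if'') direction, suppose $S = \Psi_{\bbeta}\bS$ with $\bS \in \cS(\cU,\ell_{2}(\cY))$, so that the classical Schur kernel $K_{\bS}(z,\overline{\zeta}) = (1-z\overline{\zeta})^{-1}\bigl(I_{\ell_{2}(\cY)} - \bS(z)\bS(\zeta)^*\bigr)$ is positive. I would substitute $\bS(z)\bS(\zeta)^* = I_{\ell_{2}(\cY)} - (1-z\overline{\zeta})K_{\bS}(z,\overline{\zeta})$ into $S(z)S(\zeta)^* = \Psi_{\bbeta}(z)\bS(z)\bS(\zeta)^*\Psi_{\bbeta}(\zeta)^*$ and invoke the reproducing identity \eqref{2.7a} in the form $\Psi_{\bbeta}(z)\Psi_{\bbeta}(\zeta)^* = (1-z\overline{\zeta})k_{\bbeta}(z,\overline{\zeta})I_{\cY}$. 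After dividing by $(1-z\overline{\zeta})$, the two copies of $k_{\bbeta}I_{\cY}$ cancel, leaving
\[
K_{S}(z,\overline{\zeta}) = \Psi_{\bbeta}(z)\,K_{\bS}(z,\overline{\zeta})\,\Psi_{\bbeta}(\zeta)^*,
\]
which is manifestly positive; hence $S \in \cS_{\bbeta}(\cU,\cY)$.

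For the ``only if'' direction, I would start from $K_{S}\succeq 0$ (see \eqref{0.5}) and clear the denominator using \eqref{2.7a} to obtain the Agler-type decomposition $\Psi_{\bbeta}(z)\Psi_{\bbeta}(\zeta)^* = S(z)S(\zeta)^* + (1-z\overline{\zeta})K_{S}(z,\overline{\zeta})$. Since $K_{S}\succeq 0$, take a Kolmogorov decomposition $K_{S}(z,\overline{\zeta}) = H(z)H(\zeta)^*$ with $H(z)\in\cL(\cH,\cY)$ (concretely $\cH = \cH(K_{S})$ and $H(\zeta)^* y = K_{S}(\cdot,\overline{\zeta})y$). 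Transferring the $z\overline{\zeta}K_{S}$ term across yields the Gram identity
\[
\Psi_{\bbeta}(z)\Psi_{\bbeta}(\zeta)^* + z\overline{\zeta}\,H(z)H(\zeta)^* = S(z)S(\zeta)^* + H(z)H(\zeta)^*,
\]
which says precisely that the densely defined map
\[
V \colon \sbm{ \overline{\zeta}\,H(\zeta)^* y \\ \Psi_{\bbeta}(\zeta)^* y } \longmapsto \sbm{ H(\zeta)^* y \\ S(\zeta)^* y }, \qquad \zeta\in\D,\ y\in\cY,
\]
is an isometry from a subspace of $\cH\oplus\ell_{2}(\cY)$ onto a subspace of $\cH\oplus\cU$.

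Next I would extend $V$ to a contraction on all of $\cH\oplus\ell_{2}(\cY)$ (by zero on the orthogonal complement of its domain, or to a unitary after enlarging $\cH$) and write it as a colligation $\sbm{ A & B \\ C & D }\colon \cH\oplus\ell_{2}(\cY)\to\cH\oplus\cU$. The two relations encoded by $V$ read $(I-\overline{\zeta}A)H(\zeta)^* y = B\Psi_{\bbeta}(\zeta)^* y$ and $S(\zeta)^* y = \overline{\zeta}\,C H(\zeta)^* y + D\Psi_{\bbeta}(\zeta)^* y$; eliminating $H(\zeta)^*$ gives $S(\zeta)^* = \bigl[D + \overline{\zeta}\,C(I-\overline{\zeta}A)^{-1}B\bigr]\Psi_{\bbeta}(\zeta)^*$. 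Taking adjoints and setting $\bS(z) := D^* + z B^*(I-zA^*)^{-1}C^*$ yields $S(z) = \Psi_{\bbeta}(z)\bS(z)$; moreover $\bS$ is the transfer function of the contractive colligation $\sbm{ A^* & C^* \\ B^* & D^* }$, so $\bS \in \cS(\cU,\ell_{2}(\cY))$ by the standard realization theorem, as required. Convergence of $(I-\overline{\zeta}A)^{-1}$ for $\zeta\in\D$ and analyticity of $\bS$ are automatic from $\|A\|\le 1$.

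The step I expect to be the main obstacle is the bookkeeping in the lurking-isometry construction: checking that the Gram identity genuinely defines an \emph{isometry} rather than merely a bounded map (this rests on the precise conjugation conventions in \eqref{0.5} and \eqref{2.7a}), arranging the output space of the colligation to be exactly $\ell_{2}(\cY)$ so that it matches the domain of $\Psi_{\bbeta}$, and confirming that extending $V$ off its domain does not disturb the defining relations. The last point is harmless, since those relations involve only vectors lying in the domain of $V$, on which the extension agrees with $V$.
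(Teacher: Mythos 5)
Your proposal is correct, and the two halves relate to the paper's proof differently. The ``if'' direction is essentially identical to the paper's: both substitute $S = \Psi_{\bbeta}\bS$ into the kernel \eqref{0.5}, use \eqref{2.7a} to cancel the $k_{\bbeta}$ terms, and arrive at $K_{S}(z,\overline{\zeta}) = \Psi_{\bbeta}(z)\bigl[(1-z\overline{\zeta})^{-1}(I-\bS(z)\bS(\zeta)^{*})\bigr]\Psi_{\bbeta}(\zeta)^{*}\succeq 0$. For the ``only if'' direction the paper simply rewrites $K_{S}$ as $\bigl(\Psi_{\bbeta}(z)\Psi_{\bbeta}(\zeta)^{*}-S(z)S(\zeta)^{*}\bigr)/(1-z\overline{\zeta})$ and cites Leech's theorem \cite[p.~107]{RR} to produce $\bS\in\cS(\cU,\ell_{2}(\cY))$ with $S=\Psi_{\bbeta}\bS$, whereas you prove that factorization from scratch via the Kolmogorov decomposition $K_{S}=HH^{*}$, the Gram identity, the lurking isometry $V$, and the transfer-function realization $\bS(z)=D^{*}+zB^{*}(I-zA^{*})^{-1}C^{*}$ of the adjoint colligation. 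What you have written is in effect the standard proof of Leech's theorem specialized to the pair $(\Psi_{\bbeta},S)$; your route buys a self-contained argument (and makes explicit that $\bS$ can be realized by a contractive colligation), while the paper's citation buys brevity. Your bookkeeping checks out: the Gram identity does define an isometry on the span of the vectors $\sbm{\overline{\zeta}H(\zeta)^{*}y\\ \Psi_{\bbeta}(\zeta)^{*}y}$, the zero extension off the closure of that span is a contraction and does not disturb the defining relations, $(I-\overline{\zeta}A)^{-1}$ exists for $|\zeta|<1$ since $\|A\|\le 1$, and the adjoint colligation is contractive, so $\bS$ is Schur class by the realization theorem.
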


\begin{proof}  Suppose first that $S$ has the form 
    \eqref{Schur-fact} with $\bS$ in $\cS(\cU, \ell_{2}(\cY))$.  
    Then we compute, making use of \eqref{2.7a}, that 
    \begin{align*}
k_{\bbeta}(z,\overline{\zeta})\cdot I_{\cY}- \frac{ S(z) 
	S(\zeta)^{*}}{ 1 - z \bar{\zeta}}  & =
k_{\bbeta}(z,\overline{\zeta})\cdot I_{\cY} - \frac{ \Psi_{\bbeta}(z) \bS(z) 
    \bS(\zeta)^{*} \Psi_{\bbeta}(\zeta)^{*}}{(1 - z \bar{\zeta})}  \\
    & = \Psi_{\bbeta}(z) \left[ \frac{I - \bS(z) \bS(\zeta)^{*}}{ 1 - z 
    \bar{\zeta}} \right] \Psi_{\bbeta}(\zeta)^{*}  \succeq 0,
    \end{align*}
    and it follows that $S \in \cS_\bbeta(\cU,\cY)$ by the 
    criterion that the kernel in \eqref{0.5} be positive.
    
\smallskip

    Conversely, suppose that $S$ is in the  class $\cS_\bbeta(\cU,\cY)$.  It then follows that 
    the kernel \eqref{0.5} is positive.  From \eqref{2.7a} we see that
    $$
	K_{S}(z, \overline{\zeta}) = \frac{ \Psi_{\bbeta}(z) \Psi_{\bbeta}(\zeta)^{*} 
	- S(z) S(\zeta)^{*}}{ 1 - z \bar{\zeta}},
	$$
	and hence the right-hand side is a positive kernel.  It then 
	follows from the theorem of Leech \cite[p. 107]{RR} that there is an $\bS$ in 
	the Schur class $\cS(\cU, \ell_{2}(\cY))$ so that 
	$S = \Psi_{\bbeta} \bS$, i.e., \eqref{Schur-fact} holds.
   \end{proof}
   
The representation formula \eqref{Schur-fact} makes it possible to reduce certain questions 
concerning the generalized Schur class $\cS_{{\mathbf 1} \to 
\bbeta}$ to well-understood questions concerning the 
classical Schur class $\cS$.  In the next section we demonstrate how 
this principle can be applied in the context of interpolation.

\section{Multiplier interpolation problems}

In this section we study a Nevanlinna-Pick type interpolation problem 
in the class  $\cS_\bbeta(\cU,\cY)$. To formulate the problem we need several definitions.

\smallskip

A pair $(E,T)$ consisting of operators $T\in\cL(\cX)$ and $E\in\cL(\cX,\cY)$ is called 
an {\em output pair}. An output pair $(E,T)$ is called {\em $\bbeta$-output-stable} if  
the associated $\bbeta$-observability operator
\begin{equation}
\cO_{\bbeta,E,T}: \; x\mapsto E \, k_{\bbeta}(z,T)x=\sum_{j=0}^\infty (\beta_j^{-1}ET^jx) \, z^j
\label{0.6}
\end{equation} 
maps $\cX$ into $H^2_{\bbeta}(\cY)$ and is bounded. If $(E,T)$ is $\bbeta$-output stable, then the 
{\em $\bbeta$-observability gramian}
$$
{\mathcal G}_{\bbeta,E, T}:=(\cO_{\bbeta,E, T})^{*}\cO_{\bbeta,E,T}
$$
is bounded on $\cX$ and can be represented via the series
\begin{equation}
{\mathcal G}_{\bbeta,E, T}= \sum_{k=0}^\infty \beta_j^{-1}\cdot T^{*k}E^*ET^k
\label{0.8}
\end{equation}
converging in the strong operator topology. Observe that in case $\bbeta={\bf 1}$, the 
observability operator \eqref{0.6} amounts to the well-known observability operator
$$
\cO_{{\bf 1},E,T}: \; x\mapsto E(I-zT)^{-1}x
$$
and the ${\bf 1}$-output stability means that $\cO_{{\bf 1},E,T}$ is bounded as an operator 
from $\cX$ to $H^2(\cY)$. 

\smallskip

For a $\bbeta$-output stable pair $(E,T)$, we define the tangential functional calculus $f\mapsto
(E^*f)^{\wedge L}(T^*)$ on $H^2_{\bbeta}(\cY)$ by
\begin{equation}
(E^*f)^{\wedge L}(T^*)=\sum_{j=0}^\infty T^{* j}E^{*}f_j\quad\mbox{if}\quad
f(z)=\sum_{j=0}^\infty f_jz^j \in H^2_{\bbeta}(\cY).
\label{0.9}
\end{equation}
The computation
\begin{align*} 
\left\langle \sum_{j=0}^\infty T^{* j}E^{*}f_j, \, x\right\rangle_{\cX}=&
\sum_{j=0}^\infty \left\langle f_j, \, ET^jx\right\rangle_{\cY}\\
=&\sum_{j=0}^\infty \beta_{j}\cdot \left\langle f_j, \,
\beta_j^{-1} ET^jx\right\rangle_{\cY}=
\left\langle f, \, \cO_{\bbeta,E,T}x\right\rangle_{H^2_{\bbeta}(\cY)}
\end{align*}
shows that the $\bbeta$-output stability of $(E, T)$ is exactly
what is needed to verify that the infinite series in the definition
\eqref{0.9} of $(E^*f)^{\wedge L}(T^*)$ converges in the weak topology on $\cX$.
The same computation shows that tangential evaluation
with operator argument amounts to the adjoint of $\cO_{\bbeta,E, T}$:
\begin{equation}   \label{evvecfunc}
(E^*f)^{\wedge L}(T^*)= \cO_{\bbeta,E, T}^{*}f\quad\mbox{for}\quad f \in H^2_{\bbeta}(\cY).
\end{equation}
The evaluation map extends to multipliers $S\in\cS_\bbeta(\cU,\cY)$ by
\begin{equation}
(E^*S)^{\wedge L}(T^*)= \cO_{\bbeta,E, T}^{*}M_S\vert_{\cU}.
\label{0.11}
\end{equation}
The objective of this section is to study the interpolation problem {\bf IP} whose data set 
consists of three operators 
\begin{equation}
T\in\cL(\cX),\quad E\in\cL(\cX,\cY), \quad N\in\cL(\cX,\cU)
\label{0.10}  
\end{equation}
such that the pair $(E,T)$ is $\bbeta$-output stable and the pair  $(N,T)$ is ${\bf 1}$-output stable.

\bigskip

{\bf IP}: {\em Given operators \eqref{0.10}, find all 
contractive multipliers $S\in\cS_\bbeta(\cU,\cY)$ such that 
\begin{equation}
(E^*S)^{\wedge L}(T^*):=\cO_{\bbeta,E, T}^{*}M_S\vert_{\cU}=N^*.
\label{0.12} 
\end{equation}} 

\begin{example}
{\rm By way of motivation of problem {\bf IP}, we note that if we take the data set $(T, E,N)$ of the 
form
$$
T = \begin{bmatrix} \overline{z}_{1}I_{\cY} & & 0\\  & \ddots & \\ 0& & \overline{z}_{k}I_{\cY}
\end{bmatrix}, \quad E = \begin{bmatrix} I_{\cY} & \ldots & I_{\cY}
\end{bmatrix}, \quad N = \begin{bmatrix} V_{1}^* & \ldots & V_{k}^*\end{bmatrix} 
$$
for some $z_{1}, \dots, z_{k}\in\D$ and $V_1,\ldots,V_k\in\cL(\cU,\cY)$,
then it follows from \eqref{0.9} that  
$$
(E^*S)^{\wedge L}(T^*)=\begin{bmatrix} S(z_1) \\ \vdots \\ S(z_k)\end{bmatrix},
$$
so that condition \eqref{0.12} transcribes to Nevanlinna-Pick interpolation conditions
\begin{equation}
S(z_{i}) =V_i \quad\text{for}\quad i = 1, \dots, k.
\label{NPcond} 
\end{equation}}
\label{E:2.3}
\end{example}

The stability assumption for the pair $(E,T)$ is needed to define the expression on the left side of 
\eqref{0.12}. We next show that the stability assumption for the pair $(N,T)$ is not restrictive.

\begin{proposition}
Let us assume that the pair $(E,T)$ is $\bbeta$-output stable and that 
there is a function $S\in\cS_\bbeta(\cU,\cY)$ satisfying condition \eqref{0.12}. Then 
the pair $(N,T)$ is ${\bf 1}$-output stable and the following equality holds:
\begin{equation}
\cO_{\bbeta,E, T}^*M_S=\cO_{{\bf 1},N,T}^*: \; H^2(\cU)\to \cX.
\label{1.1u}
\end{equation}
Furthermore, the observability gramian 
\begin{equation}
\cG_{{\bf 1},N,T}:=\cO_{{\bf 1},N,T}^*\cO_{{\bf 1},N,T}=\sum_{j=0}^\infty T^{*j}N^*NT^j
\label{1.1v}  
\end{equation}
satisfies the Stein identity
\begin{equation}
\cG_{{\bf 1},N,T}-T^*\cG_{{\bf 1},N,T}T =N^*N.
\label{1.8}
\end{equation}
\label{P:neces}
\end{proposition}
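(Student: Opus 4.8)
The whole argument can be organized around the bounded operator
\[
A := \cO_{\bbeta,E, T}^{*}\,M_S \colon H^2(\cU)\to\cX,
\]
which is bounded with $\|A\|\le\|\cO_{\bbeta,E,T}\|$ because $(E,T)$ is $\bbeta$-output stable and $M_S$ is a contraction (being $S\in\cS_\bbeta(\cU,\cY)$). I would first record two intertwining relations. A one-step index shift in the functional-calculus formula \eqref{0.9} gives
\[
\cO_{\bbeta,E,T}^{*}\,S_\bbeta = T^{*}\,\cO_{\bbeta,E,T}^{*},
\]
where $S_\bbeta\colon f\mapsto zf$ is the shift on $H^2_\bbeta(\cY)$; and the defining property $S(z)\cdot zu(z)=z\cdot S(z)u(z)$ of a multiplication operator gives $M_S M_z=S_\bbeta M_S$, where $M_z$ denotes the shift on $H^2(\cU)$.

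Combining these, $A M_z=\cO_{\bbeta,E,T}^{*}S_\bbeta M_S=T^{*}A$, hence $A M_z^{k}=T^{*k}A$ for every $k\ge0$. Evaluating on a monomial $z^{k}u$ with $u\in\cU$ and using the interpolation condition \eqref{0.12} (which reads $Au=\cO_{\bbeta,E,T}^{*}(M_S u)=N^{*}u$ on the constants $\cU\subset H^2(\cU)$), I obtain
\[
A(z^{k}u)=T^{*k}(Au)=T^{*k}N^{*}u\qquad(k\ge0,\ u\in\cU).
\]

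Now I would read off the adjoint $A^{*}\colon\cX\to H^2(\cU)$. For $x\in\cX$ the pairing
\[
\langle A^{*}x,\,z^{k}u\rangle_{H^2(\cU)}=\langle x,\,T^{*k}N^{*}u\rangle_{\cX}=\langle NT^{k}x,\,u\rangle_{\cU}
\]
identifies the $k$-th Taylor coefficient of $A^{*}x$ as $NT^{k}x$, so that $A^{*}x=\sum_{k\ge0}(NT^{k}x)\,z^{k}$, which is exactly the formal series defining $\cO_{{\bf 1},N,T}x$. Since $A$ is bounded, so is $A^{*}$, and its range lies in $H^2(\cU)$; thus $\cO_{{\bf 1},N,T}$ maps $\cX$ boundedly into $H^2(\cU)$, i.e., $(N,T)$ is ${\bf 1}$-output stable. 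This is the heart of the matter: the stability of $(N,T)$ is obtained not by estimating $\sum_j\|NT^{j}x\|^2$ directly, but for free, as the boundedness of the adjoint of the already-bounded operator $A$. Taking adjoints in $\cO_{{\bf 1},N,T}=A^{*}$ then yields $\cO_{{\bf 1},N,T}^{*}=A=\cO_{\bbeta,E,T}^{*}M_S$, which is \eqref{1.1u}.

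Finally, the Stein identity \eqref{1.8} is a routine telescoping. With stability now established, the gramian \eqref{1.1v}, namely $\cG_{{\bf 1},N,T}=\sum_{j\ge0}T^{*j}N^{*}NT^{j}$, converges in the strong operator topology (its partial sums are increasing and bounded above by $\|\cO_{{\bf 1},N,T}\|^{2}I$), so continuity of $T$ and $T^{*}$ permits the index shift
\[
T^{*}\cG_{{\bf 1},N,T}T=\sum_{j\ge0}T^{*(j+1)}N^{*}NT^{j+1}=\cG_{{\bf 1},N,T}-N^{*}N,
\]
which rearranges to \eqref{1.8}. I expect the only genuine obstacle to be the conceptual step in the third paragraph — recognizing that ${\bf 1}$-output stability should be extracted from the boundedness of $A^{*}$ rather than proved head-on — together with the minor care needed to justify the coefficientwise identification of $A^{*}x$ with $\cO_{{\bf 1},N,T}x$; everything downstream of $\cO_{{\bf 1},N,T}=A^{*}$ is mechanical.
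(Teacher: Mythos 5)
Your proof is correct, but it takes a genuinely different route from the paper's. The paper expands $\cO^{*}_{\bbeta,E,T}M_{S}h$ for a general $h\in H^{2}(\cU)$ as the double series $\sum_{j,k}(T^{*})^{j+k}E^{*}S_{j}h_{k}$ of \eqref{1.3u}, justifying the rearrangement by an Abel-summation/regularization argument, and then matches this term-by-term with the series for $\cO^{*}_{{\bf 1},N,T}h$ obtained from \eqref{0.12}. You instead exploit the intertwining $\cO^{*}_{\bbeta,E,T}(zg)=T^{*}\cO^{*}_{\bbeta,E,T}g$ (a one-step index shift in \eqref{0.9}, valid here because $z\,(Sh)=S\,(zh)$ lies in $H^{2}_{\bbeta}(\cY)$ whenever $h\in H^{2}(\cU)$) to evaluate the bounded operator $A=\cO^{*}_{\bbeta,E,T}M_{S}$ one monomial at a time, getting $A(z^{k}u)=T^{*k}N^{*}u$, and then read off $A^{*}x$ coefficientwise as $\cO_{{\bf 1},N,T}x$; density of the monomials plus boundedness of $A$ does the rest. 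This sidesteps the convergence and rearrangement issues entirely and, as you note, extracts the ${\bf 1}$-output stability of $(N,T)$ ``for free'' from the boundedness of $A^{*}$ --- the same mechanism as in the paper (where $\cO^{*}_{{\bf 1},N,T}$ is identified with the bounded operator $\cO^{*}_{\bbeta,E,T}M_{S}$), but reached more cleanly. What the paper's heavier computation buys is the explicit double-series formula \eqref{1.3u}, which is reused verbatim to justify the analogous rearrangement in the proof of Lemma \ref{P:IPreduction}; your argument does not produce that formula, though it could be recovered from $A(z^{k}u)=T^{*k}N^{*}u$ by linearity and continuity. Your telescoping proof of the Stein identity \eqref{1.8} is the standard one the paper invokes as ``well known.''
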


\begin{proof} Let $S$ be a function in  $S\in\cS_\bbeta(\cU,\cY)$ subject to \eqref{0.12}.
Then for a function $h(z)={\displaystyle\sum_{j=0}^\infty h_jz^j}\in 
H^2(\cU)$, we have $(M_{S}h)(z) = {\displaystyle\sum_{\ell=0}^{\infty} \left( 
\sum_{j = 0}^{k} S_{j} h_{\ell-j} \right) z^{\ell}}$.  Hence, as a 
consequence of \eqref{evvecfunc} we have
$$
\cO^*_{\bbeta,E,T}M_Sh= \left( E^{*}(Sh) \right)^{\wedge L}(T^{*}) = 
\sum_{\ell=0}^{\infty} T^{*\ell} E^{*}\left( \sum_{j = 0}^{\ell}S_{j} 
h_{\ell-j}\right),
$$
where the latter series converges weakly since the pair $(E,T)$ is $\bbeta$-output stable
and since $Sh\in H^2_\bbeta(\cY)$. If we regularize the series by 
replacing $S_{j}$ by $r^{j} S_{j}$ and replacing 
$h_{i}$ by $r^{i} h_{i}$, we even get that the 
double series in \eqref{1.3u}, after taking the inner product against 
a fixed vector $x \in \cX$, converges absolutely.  We may then 
rearrange the series to have the form
$$
\langle \cO^*_{\bbeta,E,T}M_{S_{r}}h_{r}, x \rangle = \sum_{j,k=0}^{\infty} 
\langle r^{j+k}(T^{*})^{j+k} E^{*} S_{j} h_{k}, \; x \rangle.
$$
We may then invoke Abel's theorem to take the limit as $r \uparrow 1$ (justified by the facts 
that $(E,T)$ is $\bbeta$-output stable and that $Sh \in 
H^{2}_\bbeta(\cY)$---see \cite[page 175]{Rudin}) to get
\begin{equation}   \label{1.3u}
 \cO_{\bbeta,E,T}^{*} M_{S}h = (E^{*}Sh)^{\wedge L}(T^{*}) = 
 \sum_{j,k=0}^{\infty} (T^{*})^{j+k} E^{*} S_{j} h_{k}.
\end{equation}
On the other hand, due to \eqref{0.12},
\begin{align*}
\cO_{{\bf 1},N,T}^*h=&(N^*h)^{\wedge L}(T^*)
=\sum_{k=0}^\infty T^{*k}N^*h_k=\sum_{k=0}^\infty T^{*k}\cO^*_{\bbeta,E,T}Sh_k\\
=&\sum_{k=0}^\infty T^{* k}
\left(\sum_{j=0}^\infty T^{*j}E^*S_j\right)h_k
=\sum_{j,k=0}^\infty (T^*)^{j+k}E^*S_j h_k
\end{align*}
where all the series converge weakly, since that in \eqref{1.3u} does.
Since $h$ was picked arbitrarily in $H^2(\cU)$, the last equality and
\eqref{1.3u} imply \eqref{1.1u}. Therefore, the operator $\cO_{{\bf 1},N, T}^*: \,  H^2(\cU)\to \cX$ is 
bounded and hence the pair $(N,T)$ is ${\bf 1}$-output stable. Therefore, the series in \eqref{1.1v}
converges strongly and (as is well known) satisfies the Stein identity \eqref{1.8}. 
\end{proof}

We shall have need of the auxiliary observation operator described in 
the following lemma.

\begin{lemma}
Let us assume that the weight sequence $\bbeta$ is non-increasing and that the pair $(E,T)$ is 
$\bbeta$-output stable. Then the operator  
\begin{equation}
\widetilde{\cO}_{\bbeta,E,T}: \; x\mapsto \left\{\frac{1}{\sqrt{\gamma_j}} \, ET^jx\right\}_{j\ge 0}
\label{1.6}
\end{equation}
(where $\gamma_j$'s are defined from $\bbeta$ as in \eqref{defgamma}) maps $\cX$ into $ \ell^2(\cY)$.
Furthermore, the pair $(\widetilde{\cO}_{\bbeta,E,T}, \, T)$ is ${\bf 1}$-output stable and the following 
relations 
hold: 
\begin{equation}
\widetilde{\cO}_{\bbeta,E,T}^*\widetilde{\cO}_{\bbeta,E,T}=
\cG_{\bbeta,E,T}-T^*\cG_{\bbeta,E,T}T\quad\mbox{and}\quad \cG_{{\bf 1}, 
\widetilde{\cO}_{\bbeta,E,T},T}=\cG_{\bbeta,E,T}.
\label{1.7}
\end{equation}
\label{L:3.1}
\end{lemma}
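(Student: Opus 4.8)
The plan is to derive all four assertions from a single computation of the difference $\cG_{\bbeta,E,T}-T^*\cG_{\bbeta,E,T}T$. First I would substitute the series \eqref{0.8} and reindex the second term. Since $T^*\cG_{\bbeta,E,T}T=\sum_{j\ge 1}\beta_{j-1}^{-1}T^{*j}E^*ET^j$, subtraction yields
\begin{equation*}
\cG_{\bbeta,E,T}-T^*\cG_{\bbeta,E,T}T=\beta_0^{-1}E^*E+\sum_{j\ge 1}\left(\beta_j^{-1}-\beta_{j-1}^{-1}\right)T^{*j}E^*ET^j=\sum_{j\ge 0}\gamma_j^{-1}T^{*j}E^*ET^j,
\end{equation*}
the last equality using $\gamma_0=\beta_0=1$ and the definition \eqref{defgamma} of $\gamma_j$. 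Because $(E,T)$ is $\bbeta$-output stable, both series defining $\cG_{\bbeta,E,T}$ and $T^*\cG_{\bbeta,E,T}T$ converge strongly, so this difference is a bounded operator; moreover it is positive, as each coefficient $\gamma_j^{-1}=\beta_j^{-1}-\beta_{j-1}^{-1}$ is nonnegative (here the non-increasing hypothesis is used). Its quadratic form is $\sum_{j\ge 0}\gamma_j^{-1}\|ET^jx\|^2$, which is precisely $\|\widetilde{\cO}_{\bbeta,E,T}x\|_{\ell^2(\cY)}^2$. This simultaneously shows that $\widetilde{\cO}_{\bbeta,E,T}$ of \eqref{1.6} maps $\cX$ boundedly into $\ell^2(\cY)$ and, by polarization, establishes the first identity in \eqref{1.7}.

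For the second identity and the ${\bf 1}$-output stability I would telescope. Setting $P:=\widetilde{\cO}_{\bbeta,E,T}^*\widetilde{\cO}_{\bbeta,E,T}=\cG_{\bbeta,E,T}-T^*\cG_{\bbeta,E,T}T$, the gramian $\cG_{{\bf 1},\widetilde{\cO}_{\bbeta,E,T},T}=\sum_{k\ge 0}T^{*k}PT^k$ has collapsing partial sums,
\begin{equation*}
\sum_{k=0}^{N}T^{*k}PT^k=\cG_{\bbeta,E,T}-T^{*(N+1)}\cG_{\bbeta,E,T}T^{N+1}.
\end{equation*}
Since $P\succeq 0$ these partial sums form an increasing family of positive operators, so it remains only to show that the subtracted term tends to $0$; then the series converges strongly to $\cG_{\bbeta,E,T}$, giving the second identity in \eqref{1.7}, and boundedness of the resulting gramian is exactly the asserted ${\bf 1}$-output stability of $(\widetilde{\cO}_{\bbeta,E,T},T)$.

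The main obstacle is this vanishing, and it is the one genuinely analytic point (as well as the second place where the non-increasing hypothesis enters). I would compute, for each $x\in\cX$,
\begin{equation*}
\langle T^{*n}\cG_{\bbeta,E,T}T^n x,x\rangle=\sum_{j\ge n}\beta_{j-n}^{-1}\,\|ET^jx\|^2,
\end{equation*}
and then invoke $\beta_{j-n}\ge\beta_j$ (monotonicity of $\bbeta$) to dominate the right-hand side by the tail $\sum_{j\ge n}\beta_j^{-1}\|ET^jx\|^2$ of the convergent series $\langle\cG_{\bbeta,E,T}x,x\rangle=\sum_{j\ge 0}\beta_j^{-1}\|ET^jx\|^2$. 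This tail tends to $0$ as $n\to\infty$, so $T^{*(N+1)}\cG_{\bbeta,E,T}T^{N+1}\to 0$ in the weak (indeed strong) operator topology, completing the proof.
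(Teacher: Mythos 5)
Your argument is correct. The first half coincides with the paper's: you compute $\cG_{\bbeta,E,T}-T^*\cG_{\bbeta,E,T}T=\sum_{j\ge 0}\gamma_j^{-1}T^{*j}E^*ET^j$ by reindexing, note that positivity of the coefficients (this is where monotonicity of $\bbeta$ enters) makes this a positive operator whose quadratic form is exactly $\|\widetilde{\cO}_{\bbeta,E,T}x\|^2_{\ell^2(\cY)}$, and deduce both the mapping property and, by polarization, the first identity in \eqref{1.7}. For the second identity the routes diverge. The paper writes $\cG_{{\bf 1},\widetilde{\cO}_{\bbeta,E,T},T}$ as a double series in positive terms, rearranges it, and collapses it using the explicit summation identity $\sum_{k=0}^{j}\gamma_k^{-1}=\beta_j^{-1}$; the rearrangement is harmless because all terms are positive. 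You instead treat $P=\cG_{\bbeta,E,T}-T^*\cG_{\bbeta,E,T}T$ as the right-hand side of a Stein equation and telescope, which reduces everything to showing $T^{*n}\cG_{\bbeta,E,T}T^{n}\to 0$; your domination of $\langle T^{*n}\cG_{\bbeta,E,T}T^{n}x,x\rangle=\sum_{j\ge n}\beta_{j-n}^{-1}\|ET^jx\|^2$ by the tail $\sum_{j\ge n}\beta_j^{-1}\|ET^jx\|^2$ (using monotonicity a second time) is correct and closes the gap, and the uniform bound $\langle T^{*n}\cG_{\bbeta,E,T}T^{n}x,x\rangle\le\langle\cG_{\bbeta,E,T}x,x\rangle$ upgrades weak to strong convergence as you note. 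Your version trades the paper's combinatorial identity for a more structural ``solution of the Stein equation is unique among those with vanishing boundary term'' argument; it is slightly longer but makes explicit where output stability is genuinely used analytically, whereas the paper's computation is shorter and purely algebraic once positivity justifies the rearrangement. Both are complete proofs.
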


\begin{proof}
Making use of the power series representation \eqref{0.8} for $\cG_{\bbeta,E,T}$ and the formulas 
\eqref{defgamma} for $\gamma_j$, we get
\begin{align}
\cG_{\bbeta,E,T}-T^*\cG_{\bbeta,E,T}T&=\sum_{k=0}^\infty \beta_k^{-1}\cdot
 T^{*k}E^*ET^k-\sum_{k=1}^\infty \beta_{k-1}^{-1}\cdot T^{*k}E^*ET^k\notag\\
&=E^*E+\sum_{k=1}^\infty \left(\beta_k^{-1}-\beta_{k-1}^{-1}\right) T^{*k}E^*ET^k\notag\\
&=\sum_{k=0}^\infty \gamma_k^{-1}\cdot T^{*k}E^*ET^k=\cG_{\bgamma,E,T}
\label{1.5}
\end{align}
from which we conclude that the series on the right side of \eqref{1.5} converges strongly.
Then we see from \eqref{1.6} that 
$$
\|\widetilde{\cO}_{\bbeta,E,T}x\|^2_{\ell^2(\cY)}=\langle \cG_{\bgamma,E,T}x, \, 
x\rangle_\cX<\infty,
$$
We conclude that $\widetilde{\cO}_{\bbeta,E,T}x$ belongs to $\ell^2(\cY)$ for 
any $x\in\cX$ and that 
$\widetilde{\cO}_{\bbeta,E,T}^*\widetilde{\cO}_{\bbeta,E,T}=\cG_{\bgamma,E,T}$.
Substituting this last relation into \eqref{1.5} gives the first relation in \eqref{1.7}. Finally,  from 
\eqref{defgamma} we see that $\; {\displaystyle \sum_{k=0}^j\gamma_k^{-1}=\beta_j^{-1}}$
for all $ j\ge 0$.
We therefore have
\begin{align*}
\cG_{{\bf 1}, \widetilde{\cO}_{\bbeta,E,T},T}:= &\sum_{j=0}^\infty 
T^{*k}\widetilde{\cO}_{\bbeta,E,T}^*\widetilde{\cO}_{\bbeta,E,T}T^k
=\sum_{j=0}^\infty T^{*k}\cG_{\bgamma,E,T}T^k \\
=& \sum_{j=0}^\infty \sum_{k=0}^\infty\gamma_k^{-1}\cdot T^{*(k+j)}E^*ET^{k+j}
= \sum_{j=0}^\infty \sum_{k=0}^j \gamma_k^{-1}\cdot T^{*j}E^*ET^{j}\\
=&\sum_{j=0}^\infty \beta_j^{-1}\cdot T^{*j}E^*ET^{j}=\cG_{\bbeta,E,T},
\end{align*}
and the second equality in \eqref{1.7} follows.  This in turn implies in particular that the pair  
$(\widetilde{\cO}_{\bbeta,E,T}, \, T)$ is ${\bf 1}$-output stable.\end{proof}

We next show how the auxiliary observation operator constructed in 
Lemma \ref{L:3.1} can be used to reduce the problem {\bf IP} to a 
well understood problem for a classical Schur-class function.

\begin{lemma}  \label{P:IPreduction}
Let $\bbeta$ be a non-increasing weight sequence and let $(E,T)$ be a $\bbeta$-output stable pair.
Suppose that $S \in \cS_\bbeta(\cU,\cY)$
is presented  in the form $S = \Psi_{\bbeta} \bS$ with $\bS \in \cS(\cU, \ell_{2}(\cY))$ as in
Lemma \ref{T:HBSchur}. Then 
\begin{equation}   \label{EST*=bigST*}
\left( E^{*}S \right)^{\wedge L}(T^{*})=
\left( E^{*}( \Psi_{\bbeta} \bS) \right)^{\wedge L}(T^{*}) =
\left( (\widetilde \cO_{\bbeta,E,T})^{*} \bS \right)^{\wedge L}(T^{*}).
  \end{equation}
 \end{lemma}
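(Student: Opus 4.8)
The plan is to isolate the content of \eqref{EST*=bigST*} in a single operator identity and then reduce the lemma to it. The left-hand equality is merely the substitution $S=\Psi_{\bbeta}\bS$ supplied by Theorem~\ref{T:HBSchur}, so everything rests on the right-hand equality. Since $M_S=M_{\Psi_{\bbeta}}M_{\bS}$ (because $(\Psi_{\bbeta}\bS)f=\Psi_{\bbeta}(\bS f)$), and since $\Psi_{\bbeta}$ is itself a contractive multiplier in $\cS_{\bbeta}(\ell_2(\cY),\cY)$ --- indeed \eqref{2.7a} together with $k_{\bf 1}(z,\overline{\zeta})=(1-z\overline{\zeta})^{-1}$ gives $K_{\Psi_{\bbeta}}\equiv 0$, which is the forward direction of Theorem~\ref{T:HBSchur} with $\bS=I$ --- the operator $M_{\Psi_{\bbeta}}\colon H^2(\ell_2(\cY))\to H^2_{\bbeta}(\cY)$ is bounded. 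By definition \eqref{0.11} the two multiplier evaluations in \eqref{EST*=bigST*} are the operators $\cO_{\bbeta,E,T}^{*}M_S\vert_{\cU}$ and $\cO_{{\bf 1},\widetilde{\cO}_{\bbeta,E,T},T}^{*}M_{\bS}\vert_{\cU}$ (the latter using the ${\bf 1}$-output stability of $(\widetilde{\cO}_{\bbeta,E,T},T)$ from Lemma~\ref{L:3.1}). Hence it suffices to prove the operator identity
\[
\cO_{\bbeta,E,T}^{*}\,M_{\Psi_{\bbeta}}=\cO_{{\bf 1},\widetilde{\cO}_{\bbeta,E,T},T}^{*}\colon \; H^2(\ell_2(\cY))\to\cX,
\]
for then composing on the right with $M_{\bS}\vert_{\cU}$ yields exactly the desired equality.

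I would verify this identity by pairing both sides against $F\in H^2(\ell_2(\cY))$ and $x\in\cX$ and moving the adjoints across. Writing $F(z)=\sum_{k\ge0}F_kz^k$ with $F_k\in\ell_2(\cY)$ and letting $P_i\colon\ell_2(\cY)\to\cY$ be the $i$-th coordinate projection, so that $\Psi_{\bbeta}(z)=\sum_{i\ge0}\gamma_i^{-1/2}P_i\,z^i$ by \eqref{2.7}, the left side becomes
\[
\big\langle M_{\Psi_{\bbeta}}F,\,\cO_{\bbeta,E,T}x\big\rangle_{H^2_{\bbeta}(\cY)}
=\sum_{n\ge0}\big\langle \textstyle\sum_{i+k=n}\gamma_i^{-1/2}P_iF_k,\;ET^nx\big\rangle_{\cY},
\]
the weights $\beta_n$ from the $H^2_{\bbeta}$-inner product cancelling against the $\beta_n^{-1}$ in $\cO_{\bbeta,E,T}x=\sum_n\beta_n^{-1}ET^nx\,z^n$. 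On the other hand, using $\widetilde{\cO}_{\bbeta,E,T}(T^kx)=\{\gamma_i^{-1/2}ET^{i+k}x\}_{i\ge0}$ from \eqref{1.6}, the right side is
\[
\big\langle F,\,\cO_{{\bf 1},\widetilde{\cO}_{\bbeta,E,T},T}x\big\rangle_{H^2(\ell_2(\cY))}
=\sum_{k\ge0}\sum_{i\ge0}\gamma_i^{-1/2}\big\langle P_iF_k,\;ET^{i+k}x\big\rangle_{\cY}.
\]
After the reindexing $n=i+k$ both expressions are the single double sum $\sum_{i,k\ge0}\gamma_i^{-1/2}\langle P_iF_k,ET^{i+k}x\rangle_{\cY}$, which establishes the identity.

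The one genuine point requiring care --- and the step I expect to be the main obstacle --- is the legitimacy of the rearrangement identifying the two iterated sums with the common double sum. I would settle this by proving absolute convergence of $\sum_{i,k}\gamma_i^{-1/2}\|P_iF_k\|\,\|ET^{i+k}x\|$. By the Cauchy--Schwarz inequality over the joint index $(i,k)$ this is bounded by the product of $\big(\sum_{i,k}\|P_iF_k\|^2\big)^{1/2}=\|F\|_{H^2(\ell_2(\cY))}$ and $\big(\sum_{i,k}\gamma_i^{-1}\|ET^{i+k}x\|^2\big)^{1/2}$. The second factor is precisely $\big(\sum_k\|\widetilde{\cO}_{\bbeta,E,T}T^kx\|^2\big)^{1/2}=\langle\cG_{{\bf 1},\widetilde{\cO}_{\bbeta,E,T},T}x,x\rangle^{1/2}$, which is finite because Lemma~\ref{L:3.1} (the second identity in \eqref{1.7}) gives $\cG_{{\bf 1},\widetilde{\cO}_{\bbeta,E,T},T}=\cG_{\bbeta,E,T}$, a bounded operator by the $\bbeta$-output stability of $(E,T)$. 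Thus both factors are finite, the rearrangement is justified, and the proof is complete. Alternatively one could argue directly on Taylor coefficients via the regularization-and-Abel device of Proposition~\ref{P:neces}, but routing the computation through the gramian identity of Lemma~\ref{L:3.1} makes the convergence transparent.
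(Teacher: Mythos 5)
Your proof is correct. It reaches the same diagonal re-summation identity as the paper, but packages it differently. The paper's proof is a direct Taylor-coefficient computation: it expands $\bS$ componentwise as $\bS_j(z)=\sum_k\bS_{j,k}z^k$, writes both $\left(E^*(\Psi_\bbeta\bS)\right)^{\wedge L}(T^*)$ and $\left((\widetilde\cO_{\bbeta,E,T})^*\bS\right)^{\wedge L}(T^*)$ as the common double sum $\sum_{\ell}\sum_{k=0}^{\ell}\gamma_k^{-1/2}T^{*\ell}E^*\bS_{k,\ell-k}$, and justifies the rearrangement by appealing to the regularization-and-Abel device from the proof of Proposition \ref{P:neces}. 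You instead strip out $\bS$ altogether and reduce the lemma to the $\bS$-independent operator identity $\cO_{\bbeta,E,T}^*M_{\Psi_\bbeta}=\cO_{{\bf 1},\widetilde\cO_{\bbeta,E,T},T}^*$ on all of $H^2(\ell_2(\cY))$, verified by weak pairing; the rearrangement is then justified by absolute convergence via Cauchy--Schwarz together with the gramian identity $\cG_{{\bf 1},\widetilde\cO_{\bbeta,E,T},T}=\cG_{\bbeta,E,T}$ from \eqref{1.7}. Your route buys a slightly stronger and cleaner statement (an identity of bounded operators on the whole space, not just on the range of $M_{\bS}\vert_{\cU}$) and a convergence argument that is quantitative and self-contained rather than deferred to an earlier Abel-summation sketch; it does require the preliminary observation, which you supply correctly via \eqref{2.7a}, that $\Psi_\bbeta$ is itself a contractive multiplier so that $M_{\Psi_\bbeta}$ is bounded from $H^2(\ell_2(\cY))$ to $H^2_\bbeta(\cY)$. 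The paper's version avoids that step and works only with the specific series at hand, at the cost of leaning on the regularization argument for the rearrangement.
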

 
 \begin{proof}
     Write out $\bS(z) \colon \cU \to \ell_{2}(\cY)$ as a column
 $$
\bS(z) = \begin{bmatrix} \bS_{1}(z) \\ \bS_{2}(z) \\ \vdots
     \end{bmatrix}\quad\mbox{where}\quad \bS_{j}(z) = \sum_{k=0}^{\infty} 
\bS_{j,k} z^{k}\quad\mbox{with}\quad
{\bf S}_{j,k}\in\cL(\cU,\cY).
 $$
Then $S(z) = \Psi_{\bbeta}(z) \bS(z)$ is given explicitly as
$$
\Psi_{\bbeta}(z) \bS(z)
 = \sum_{j=0}^{\infty} \gamma_j^{-\frac{1}{2}} \bS_{j}(z)  z^{j}    
= \sum_{j=0}^{\infty} \sum_{k=0}^{\infty} \gamma_j^{-\frac{1}{2}}
     \bS_{j,k} z^{j+k} = \sum_{\ell=0}^{\infty} \sum_{k=0}^{\ell}
     \gamma_k^{-\frac{1}{2}}\bS_{k, \ell -k}
     z^{\ell}
$$
where the rearrangement of the infinite series can be justified much 
as in the proof of Proposition \ref{P:neces}.
We conclude that
 \begin{equation}   \label{EST*}
     \left( E^{*} (\Psi_\bbeta \bS) \right)^{\wedge L}(T^{*}) =
     \sum_{\ell=0}^{\infty} \sum_{k = 0}^{\ell}  
     \gamma_k^{-\frac{1}{2}} T^{* \ell}E^* \bS_{k, \ell - k}.
 \end{equation}
 
 On the other hand,
$$
     \left( \widetilde \cO_{\bbeta,E,T}\right)^{*} \bS(z) = 
     \sum_{j=0}^{\infty} \gamma_j^{-\frac{1}{2}} T^{*j} E^{*} \bS_{j}(z)  
= \sum_{j=0}^{\infty} \sum_{k=0}^{\infty}\gamma_j^{-\frac{1}{2}} T^{*j} E^{*} \bS_{j,k} z^{k}
$$
 and hence
$$ 
     \left( (\widetilde \cO_{\bbeta,E,T})^{*} \bS \right)^{\wedge L}
     (T^{*}) = \sum_{j=0}^{\infty} \sum_{k=0}^{\infty}\gamma_j^{-\frac{1}{2}}
     T^{* j+k} E^{*} \bS_{j,k}  
     = \sum_{\ell=0}^{\infty} \sum_{k = 0}^{\ell}\gamma_k^{-\frac{1}{2}}
     T^{*\ell}  E^{*} \bS_{k, \ell-k}.
 $$
Comparison of the latter equality and \eqref{EST*} now gives 
\eqref{EST*=bigST*}.
  \end{proof}
The following consequence of Lemma \ref{P:IPreduction} is immediate.
\begin{corollary}
A function $S$ belongs to the class $\cS_{\bbeta}(\cU,\cY)$ and satisfies interpolation condition \eqref{0.12} 
if and 
only if it is of the form \eqref{Schur-fact} with a Schur-class function   $\bS \in \cS(\cU, \ell_{2}(\cY))$
subject to interpolation condition 
\begin{equation}   \label{0.12'}
    ((\widetilde \cO_{\bbeta,E,T})^{*} \bS)^{\wedge L}(T^{*}) = N^{*}.
\end{equation}
\label{C:3.4}
\end{corollary}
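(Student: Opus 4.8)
The plan is to combine Theorem~\ref{T:HBSchur} with Lemma~\ref{P:IPreduction}, both of which are already in hand, and simply chase the biconditional through. The statement is a corollary precisely because no new analytic work is needed: Theorem~\ref{T:HBSchur} characterizes membership $S\in\cS_\bbeta(\cU,\cY)$ as equivalent to the existence of a factorization $S=\Psi_\bbeta\bS$ with $\bS\in\cS(\cU,\ell_2(\cY))$, and Lemma~\ref{P:IPreduction} rewrites the interpolation quantity $(E^*S)^{\wedge L}(T^*)$ as $((\widetilde\cO_{\bbeta,E,T})^*\bS)^{\wedge L}(T^*)$ whenever such a factorization is present. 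So the whole argument is the observation that these two facts can be conjoined.

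First I would set up the forward direction. Suppose $S\in\cS_\bbeta(\cU,\cY)$ satisfies the interpolation condition \eqref{0.12}. By Theorem~\ref{T:HBSchur} there is some $\bS\in\cS(\cU,\ell_2(\cY))$ with $S=\Psi_\bbeta\bS$. Applying Lemma~\ref{P:IPreduction} to this factorization gives the identity
\begin{equation*}
    ((\widetilde\cO_{\bbeta,E,T})^*\bS)^{\wedge L}(T^*)=(E^*S)^{\wedge L}(T^*)=N^*,
\end{equation*}
where the last equality is exactly \eqref{0.12}. Hence $\bS$ is of the desired form and satisfies \eqref{0.12'}. For the converse direction, suppose $\bS\in\cS(\cU,\ell_2(\cY))$ satisfies \eqref{0.12'} and set $S=\Psi_\bbeta\bS$. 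Then $S\in\cS_\bbeta(\cU,\cY)$ by the sufficiency half of Theorem~\ref{T:HBSchur}, and reading the chain of equalities in \eqref{EST*=bigST*} of Lemma~\ref{P:IPreduction} from right to left yields $(E^*S)^{\wedge L}(T^*)=N^*$, which is \eqref{0.12}. This closes the loop.

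Honestly there is no genuine obstacle here, which is why the authors flag it as ``immediate.'' The one point that deserves a moment's care is that Lemma~\ref{P:IPreduction} is stated for an $S$ already known to lie in $\cS_\bbeta(\cU,\cY)$ and presented in factored form, so in the converse direction I must invoke Theorem~\ref{T:HBSchur} to produce membership of $S$ in $\cS_\bbeta(\cU,\cY)$ \emph{before} quoting the lemma's identity, or else simply note that the string of equalities in \eqref{EST*=bigST*} holds as a formal/weakly-convergent identity for any $\bS\in\cS(\cU,\ell_2(\cY))$ regardless. Either way, the $\bbeta$-output stability of $(E,T)$ (a standing hypothesis) guarantees that $(\widetilde\cO_{\bbeta,E,T})^*$ is a bona fide bounded operator by Lemma~\ref{L:3.1}, so both sides of \eqref{0.12'} are well defined and the tangential functional calculus converges. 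Given all this machinery, I would write the proof in just a few lines, essentially the two-direction chase above.
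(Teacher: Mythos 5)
Your argument is correct and is exactly the reasoning the paper intends: the corollary is stated as an immediate consequence of Lemma \ref{P:IPreduction} combined with Theorem \ref{T:HBSchur}, and your two-direction chase (with the correct care about invoking Theorem \ref{T:HBSchur} before quoting the lemma in the converse direction) is the argument the authors leave unwritten. No discrepancy with the paper's approach.
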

  Applying the known theory for the left-tangential interpolation 
  problem with operator argument for the classical Schur class now 
  leads to the following result.
  
  \begin{theorem} \label{T:IPexistence}
The problem {\bf IP} with the data set \eqref{0.10} 
and the associated observability gramians $\cG_{\bbeta,E,T}$ and $\cG_{{\bf 1},N,T}$
given in \eqref{0.8}, \eqref{1.1v} has a solution  if and only if the associated Pick matrix
      \begin{equation}   \label{Pick}
	  P: = \cG_{\bbeta,E,T} - \cG_{{\bf 1},N,T}
  \end{equation}
  is positive semidefinite. 
 \end{theorem}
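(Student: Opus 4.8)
The plan is to reduce the problem \textbf{IP} to a classical left-tangential interpolation problem with operator argument for the Schur class $\cS(\cU, \ell_2(\cY))$ and then to invoke the known solvability criterion for that problem. By Corollary \ref{C:3.4}, a function $S$ solves \textbf{IP} if and only if it has the form $S = \Psi_{\bbeta} \bS$ for some $\bS \in \cS(\cU, \ell_2(\cY))$ satisfying the interpolation condition \eqref{0.12'}, namely $((\widetilde{\cO}_{\bbeta,E,T})^{*} \bS)^{\wedge L}(T^{*}) = N^{*}$. Thus \textbf{IP} is solvable exactly when this reduced problem for $\bS$ is solvable, and it suffices to analyze the latter.

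First I would observe that \eqref{0.12'} is precisely a left-tangential interpolation problem with operator argument of the standard type for the classical Schur class, now with output coefficient space $\ell_2(\cY)$ in place of $\cY$ and with interpolation data given by the three operators $T \in \cL(\cX)$, $\widetilde{\cO}_{\bbeta,E,T} \in \cL(\cX, \ell_2(\cY))$, and $N \in \cL(\cX, \cU)$. Lemma \ref{L:3.1} guarantees that the pair $(\widetilde{\cO}_{\bbeta,E,T}, T)$ is ${\bf 1}$-output stable, while the standing hypothesis on the data set \eqref{0.10} ensures that $(N,T)$ is ${\bf 1}$-output stable; hence both observability gramians entering the classical criterion are well defined and bounded, and the reduced problem falls squarely within the scope of the classical theory.

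Next I would apply the known theorem for this classical problem: the interpolation problem \eqref{0.12'} has a solution $\bS \in \cS(\cU, \ell_2(\cY))$ if and only if the associated Pick matrix $\cG_{{\bf 1}, \widetilde{\cO}_{\bbeta,E,T}, T} - \cG_{{\bf 1},N,T}$ is positive semidefinite. The final step is purely computational: by the second identity in \eqref{1.7} we have $\cG_{{\bf 1}, \widetilde{\cO}_{\bbeta,E,T}, T} = \cG_{\bbeta,E,T}$, so this classical Pick matrix coincides with $P = \cG_{\bbeta,E,T} - \cG_{{\bf 1},N,T}$ from \eqref{Pick}. Stringing these equivalences together yields that \textbf{IP} is solvable if and only if $P \succeq 0$.

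The main obstacle is conceptual rather than technical: one must correctly recognize \eqref{0.12'} as an instance of the classical operator-argument interpolation problem and match its Pick matrix to $P$ through the gramian identity \eqref{1.7}. I expect the only genuine subtlety to be citing the appropriate version of the classical solvability theorem---in the left-tangential, operator-argument formulation with operator-valued data---and confirming that its standing stability hypotheses are exactly the ones already verified via Lemma \ref{L:3.1} and the data assumptions in \eqref{0.10}.
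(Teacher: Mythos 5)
Your proposal is correct and follows essentially the same route as the paper: reduction of \textbf{IP} via Corollary \ref{C:3.4} to the classical left-tangential operator-argument problem \eqref{0.12'}, invocation of the standard solvability criterion, and identification of the resulting Pick matrix with $P$ through the gramian identity $\cG_{{\bf 1}, \widetilde{\cO}_{\bbeta,E,T},T}=\cG_{\bbeta,E,T}$ from \eqref{1.7}. Your explicit citation of the second identity in \eqref{1.7} is if anything slightly cleaner than the paper's "readily seen" remark.
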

 
 \begin{proof}  By Corollary \ref{C:3.4}, solutions $S$ of {\bf IP} 
     exist if and only if the problem \eqref{0.12'} has a solution  $\bS$ in the Schur class
    $\cS(\cU, \ell_{2}(\cY))$. Since the pairs $(\cO_{\bbeta,E,T}, 
    T)$ and $(N,T)$ are both 
${\bf 1}$-output stable,  by the general theory of left-tangential 
     operator-argument Schur-class interpolation 
     (see e.g.~Theorem 4.4 in \cite{bbieot}), we know that the latter holds if and only if the associated Pick 
     matrix is positive semidefinite: 
\begin{equation}   \label{Pick1}
  {\mathbf P}: =   \sum_{j=0}^{\infty} T^{*j}\left[ (\widetilde \cO_{\bbeta,E,T})^{*} 
     \widetilde \cO_{\bbeta,E,T}  - N^{*} N \right] T^{j} \ge 0.
\end{equation}
It is readily seen from \eqref{0.8} and \eqref{1.1v} that 
${\mathbf P} = \cG_{\bbeta,E,T} - \cG_{{\bf 1}, N,T}$ is as in \eqref{Pick}.
      \end{proof}
 
      \smallskip
      
      \textbf{Example 2.1 continued:}
In the case of the Nevanlinna-Pick problem from Example \ref{E:2.3}, we have 
$\cG_{\bbeta,E,T}=\left[k_{\bbeta}(z_i,\overline{z}_j)I_{\cY}\right]_{i,j=1}^k$ and 
$\cG_{{\bf 1},N,T}=\left[k_{\bf 1}(z_i,\overline{z}_j)V_iV_j^*\right]_{i,j=1}^k$
and we conclude from Theorem \ref{T:IPexistence} that {\em there is a contractive multiplier 
$S\in\cS_{\bbeta}(\cU,\cY)$ satisfying the interpolation conditions \eqref{NPcond} if and only if 
the following block-operator is positive semidefinite:}
$$
P=\left[ k_{\bbeta}(z_i,\overline{z}_j)I_{\cY}-\frac{V_iV_j^*}{1-z_i\overline{z}_j}\right]_{i,j=1}^k\ge 0.
$$

\begin{remark}   \label{R:TV} 
     Let $S_\bbeta$ denote the shift operator on $H^2_\bbeta(\cY)$ defined as
$S_\bbeta: \, f(z)\to zf(z)$. It follows that $\|S_\bbeta\|={\displaystyle \sup_{j\ge
0}\frac{\beta_{j+1}}{\beta_j}}$ and thus $S_\bbeta$ is a contraction if and only if the weight sequence 
$\bbeta$
is non-increasing. Reproducing kernel calculations show that its adjoint $S_\bbeta^*$ is given by
\begin{equation}
S^*_\bbeta f=\sum_{k=0}^\infty\frac{\beta_{k+1}}{\beta_k}\cdot f_{k+1}z^k\quad\mbox{if}\quad
f(z)=\sum_{k=0}^\infty f_kz^k.
\label{5.0}
\end{equation}
If $T\in\cL(\cX)$ is strongly stable and the pair $(E,T)$ is $\bbeta$-output stable, then
the range space ${\rm Ran}\, \cO_{\bbeta,E,T}$ with lifted norm is an $S_\bbeta$-invariant (closed) subspace 
of $H^2_\bbeta(\cY)$. Indeed, making use of power series expansion \eqref{0.6} and of
\eqref{5.0} we get
$$
S_\bbeta^*{\mathcal O}_{\bbeta,E, T}x=S_\bbeta^*\sum_{k=0}^\infty \beta_k^{-1} (ET^{k}x)z^k
=\sum_{k=0}^\infty \beta_{k}^{-1}(ET^{k+1}x)z^k={\mathcal O}_{\bbeta,E, T}Tx
$$
from which the desired invariance follows. For  a strongly stable $T\in\cL(\cX)$ and operators 
$E\in\cL(\cY,\cX)$ and 
$N\in\cL(\cX,\cU)$ so that the pairs $(E,T)$ and $(N,T)$ are respectively, $\bbeta$-stable and ${\bf 
1}$-stable, 
define two subspaces 
$$
\cM_1={\rm Ran} \, \cO_{\bbeta,E,T}\subset H^2_\bbeta(\cY)\quad\mbox{and}\quad 
\cM_2={\rm Ran} \, \cO_{{\bf 1},N,T}\subset H^2(\cU)
$$
which are invariant under $S^*_\bbeta$ and $S^*_{\bf 1}$ respectively. Let us define the operator 
$\Phi: \, \cM_1\to\cM_2$ by
\begin{equation}
\Phi: \,  \cO_{\bbeta,E,T}x\to \cO_{{\bf 1},N,T}x\quad\mbox{for all}\quad x\in\cX.
\label{5.00}
\end{equation}
From the formulation \eqref{1.1u} of the interpolation condition 
\eqref{0.12}, it is clear that a necessary condition for the problem 
\textbf{IP} to have a solution is that  $\|\Phi\| \le 1$, or 
equivalently, that
\begin{equation}   \label{Pick'}
P:=\cG_{\bbeta,E,T}- \cG_{{\bf 1},N,T}\ge 0.
\end{equation}
Furthermore, the computation  (where $y = \cO_{\bbeta,E,T}x\in\cM_1$)
\begin{align*}
\Phi S_\bbeta^*y=\Phi S_\bbeta^*\cO_{\bbeta,E,T}x&=\Phi \cO_{\bbeta,E,T}Tx\\
&=\cO_{{\bf 1},N,T}Tx=S_{\bf 1}^*
\cO_{{\bf 1},N,T}x=S_{\bf 1}^*\Phi \cO_{\bbeta,E,T}x=S_{\bf 1}^*\Phi y 
\end{align*}
shows that $\Phi$ intertwines $S_\bbeta^*$ and $S_{\bf 1}^*$.
Since $S_\bbeta$ is a contraction and $S_{\bf 1}$ is an isometry, it follows from the 
Treil-Volberg commutant lifting result \cite{TV} that $\Phi$ can be extended to an operator $R:
H^2_\bbeta(\cY)\to H^2$ such that $\|R\|=\|\Phi\|\le 1$ and $RS_\bbeta^*=S_{\bf 1}^*\Phi$. Its adjoint 
$R^*$ necessarily is the operator of multiplication by a function $S\in\cS_\bbeta(\cU,\cY)$. 
In this way we see that the condition  \eqref{Pick'} is necessary and 
sufficient for the existence of solutions of the problem \textbf{IP}, i.e., we arrive at an 
alternative proof of Theorem \ref{T:IPexistence}.
One of the contributions of the present paper is to obtain an 
explicit description of the set of all solutions of the problem. 
\textbf{IP}.
 \end{remark}

It is known how to parametrize solutions of a left-tangential 
operator-argument interpolation problem for Schur-class functions; 
 the description is more transparent in the case where ${\bf P}(=P)$ is invertible.
Let $J$ be
the operator given by
\begin{equation}
J=\left[\begin{array}{cr}I_{\ell_2(\cY)} &0\\ 0& -I_{\cU}\end{array}\right]\quad\mbox{and
let}\quad \Theta(z)=\begin{bmatrix}A(z) & B(z) \\ C(z) & D(z)\end{bmatrix}
\label{2.3}
\end{equation}
be an $\cL(\ell_2(\cY)\oplus\cU)$-valued function such that for all $z,\zeta\in\D$,
\begin{equation}
\frac{J - \Theta(z)J\Theta(\zeta)^{*}}{1 -z\overline{\zeta}}
=\begin{bmatrix}\widetilde{\cO}_{\bbeta,E,T} \\ N\end{bmatrix}(I - zT)^{-1} P^{-1}(I -
\overline{\zeta}T^{*})^{-1}\begin{bmatrix}\widetilde{\cO}_{\bbeta,E,T}^* & N^*\end{bmatrix}.
\label{2.4}
\end{equation}
The function $\Theta$ is determined by equality \eqref{2.4} uniquely up
to a constant $J$-unitary factor on the right. One possible choice of $\Theta$
satisfying \eqref{2.4} is
$$
\Theta(z)=\begin{bmatrix}D_1 \\ D_2\end{bmatrix}+z\begin{bmatrix}\widetilde{\cO}_{\bbeta,E,T} \\ 
N\end{bmatrix}(I-zT)^{-1}R
$$
where the operator 
$$
\begin{bmatrix}R \\ D_1\\ D_2\end{bmatrix}\colon \ell_2(\cY)\oplus\cU\to
\begin{bmatrix}\cX \\ \ell_2(\cY) \\ \cU\end{bmatrix}
$$ 
is an injective solution to the $J$-Cholesky factorization problem
$$
\begin{bmatrix} R \\ D_1 \\ D_2\end{bmatrix}J
\begin{bmatrix} R^{*}&  D_1^{*}& D_2^* \end{bmatrix} =
\begin{bmatrix} P^{-1} & 0 \\ 0 & J
\end{bmatrix} - \begin{bmatrix} T \\ \widetilde{\cO}_{\bbeta,E,T} \\ N
\end{bmatrix} P^{-1} \begin{bmatrix} T^{*} & 
\widetilde{\cO}_{\bbeta,E,T}^* & N^*\end{bmatrix}.
$$
Such a solution exists due to the identity 
\begin{equation}
P-T^*PT=\widetilde{\cO}_{\bbeta,E,T}^*\widetilde{\cO}_{\bbeta,E,T}-N^*N
\label{1.9}
\end{equation}
which in turn, follows from \eqref{Pick}, \eqref{1.8} and \eqref{1.7}.
If ${\rm spec}(T)\cap{\mathbb
T}\neq{\mathbb T}$ (which is the case if, e.g., $\dim \cX<\infty$), then 
a function $\Theta$ satisfying \eqref{2.4} can be taken in the form
\begin{align}
\Theta(z)=&I+(z-\mu)\left[\begin{array}{c}\widetilde{\cO}_{\bbeta,E,T} \\
N\end{array}\right]
(I-zT)^{-1}P^{-1}\notag\\
&\qquad\qquad\qquad \times(\mu I-T^*)^{-1}\left[\begin{array}{cc}
\widetilde{\cO}^*_{\bbeta,E,T} & -N^*\end{array}\right],\label{2.5}
\end{align}
where $\mu$ is an arbitrary point in ${\mathbb T}\setminus {\rm spec}(T^*)$.
For $\Theta$ of the form \eqref{2.5}, the verification of identity
\eqref{2.4} is straightforward and relies on the Stein identity
\eqref{1.9} only. 

\begin{theorem}
Let us assume that the data set \eqref{0.10} of the problem {\bf IP} is such that
the operator $P$ defined in \eqref{Pick} is strictly positive definite.
Let $\Theta=\sbm{A & B \\ C & D}$ be a function satisfying
\eqref{2.4} and let $\Psi_{\bbeta}$ be given as in \eqref{2.7}. Then 
all solutions $S$ of problem {\bf IP} are parametrized by the formula
\begin{equation}
S=\Psi_{\bbeta}(A\cE+B)(C\cE+D)^{-1}.
\label{2.13}
\end{equation}
where $\cE$ is a free parameter from the Schur class $\cS(\cU,\ell_2(\cY))$.
\label{T:1.2}
\end{theorem}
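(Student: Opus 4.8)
The plan is to reduce the parametrization of solutions to problem \textbf{IP} to the known parametrization of solutions to the associated classical Schur-class interpolation problem \eqref{0.12'}, exploiting the factorization $S = \Psi_{\bbeta}\bS$ from Theorem \ref{T:HBSchur} together with Corollary \ref{C:3.4}. By that corollary, $S$ solves \textbf{IP} if and only if $S = \Psi_{\bbeta}\bS$ for some $\bS \in \cS(\cU,\ell_2(\cY))$ that solves the left-tangential operator-argument problem $((\widetilde\cO_{\bbeta,E,T})^*\bS)^{\wedge L}(T^*) = N^*$. So the formula \eqref{2.13} will follow once I show that the solutions $\bS$ of this \emph{classical} Schur-class problem are parametrized by the linear-fractional formula $\bS = (A\cE + B)(C\cE + D)^{-1}$ with $\cE$ ranging over $\cS(\cU,\ell_2(\cY))$, and then set $S = \Psi_{\bbeta}\bS$.

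First I would invoke the standard theory of left-tangential interpolation with operator argument for the classical Schur class. The relevant data for the $\bS$-problem is the pair of $\mathbf{1}$-output-stable pairs $(\widetilde\cO_{\bbeta,E,T}, T)$ (output-stable by Lemma \ref{L:3.1}) and $(N,T)$, with Pick operator $\mathbf{P} = P = \cG_{\bbeta,E,T} - \cG_{{\bf 1},N,T}$ as computed in Theorem \ref{T:IPexistence} via \eqref{Pick1}. When $P$ is strictly positive definite, the known theory (the same source \cite{bbieot} used for Theorem \ref{T:IPexistence}) provides a coefficient matrix $\Theta = \sbm{A & B \\ C & D}$, characterized up to a right $J$-unitary factor by the Stein-type identity \eqref{2.4}, such that the solution set of the interpolation problem is exactly $\{(A\cE + B)(C\cE + D)^{-1} : \cE \in \cS(\cU,\ell_2(\cY))\}$, with $C\cE + D$ invertible as a multiplier for each such $\cE$. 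The main thing to verify is that the $\Theta$ appearing in our Theorem statement is genuinely the chevron coefficient matrix of this classical problem: this is precisely the content of identity \eqref{2.4}, and the excerpt already records (via \eqref{1.9} and the remark that the verification ``relies on the Stein identity \eqref{1.9} only'') that the explicit choices of $\Theta$ satisfy it. Thus the $J$-inner coefficient matrix governing the $\bS$-problem is exactly the $\Theta$ of the hypothesis, and the linear-fractional description of all solutions $\bS$ applies verbatim.

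Combining the two steps, every solution $S$ of \textbf{IP} has the form $S = \Psi_{\bbeta}\bS = \Psi_{\bbeta}(A\cE + B)(C\cE + D)^{-1}$ for a unique\,---\,up to the $J$-unitary normalization of $\Theta$\,---\,Schur parameter $\cE$, and conversely every such expression yields, through Corollary \ref{C:3.4}, a solution in $\cS_{\bbeta}(\cU,\cY)$ satisfying \eqref{0.12}. This is exactly \eqref{2.13}.

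\textbf{The main obstacle} I anticipate is bookkeeping rather than conceptual: one must confirm that the block decomposition of $\Theta$ in \eqref{2.3} is adapted to the coefficient space $\ell_2(\cY)\oplus\cU$ in such a way that $A\cE + B$ and $C\cE + D$ act on the correct spaces and that $C\cE + D$ is boundedly invertible for every $\cE \in \cS(\cU,\ell_2(\cY))$\,---\,the latter being guaranteed by the $J$-property of $\Theta$ encoded in \eqref{2.4} but worth stating explicitly. I would also want to be careful that the reduction through Corollary \ref{C:3.4} gives a genuine bijection between solutions $S$ and solutions $\bS$, so that the parametrization of the $\bS$'s transfers to a \emph{complete} and nonredundant parametrization of the $S$'s; here the isometric nature of $M_{\Psi_{\bbeta}}$ (onto $H^2_{\bgamma}(\cY)$) from \eqref{2.7a} ensures that distinct $\bS$ cannot collapse to the same $S$ in a way that disturbs the count.
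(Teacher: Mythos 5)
Your proposal is correct and follows essentially the same route as the paper: reduce via the factorization $S=\Psi_{\bbeta}\bS$ (Lemma \ref{P:IPreduction}/Corollary \ref{C:3.4}) to the classical left-tangential operator-argument problem \eqref{0.12'} for $\bS$, then invoke the known linear-fractional parametrization from \cite{bbieot} with Pick operator ${\mathbf P}=P$ strictly positive definite. The additional care you take about the bijectivity of the correspondence $S\leftrightarrow\bS$ (via the isometry $M_{\Psi_{\bbeta}}$) is a reasonable refinement but not a departure from the paper's argument.
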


\begin{proof} Since the pair $(E,T)$ is $\bbeta$-output stable, it follows from Lemma \ref{L:3.1}
that the pair $(\widetilde{\cO}_{\bbeta,E,T},T)$ is ${\bf 1}$-output stable. 
By the theory for left-tangential  operator-argument interpolation in 
the Schur class (see e.g.~\cite{bbieot}), it is known 
that in case the operator \eqref{Pick1} (which is the same as $P$) is strictly positive definite, then 
all functions $\bS\in\cS(\cU, \ell_{2}(\cY))$ satisfying condition \eqref{0.12'}
are given by the formula
 $$
 \bS = (A \cE + B) ( C \cE + D)^{-1}
 $$
 where $\cE$ is a free parameter from the Schur class $\cS(\cU, 
 \ell_{2}(\cY))$.  The Theorem now follows as a consequence of 
 Lemma \ref{P:IPreduction}.
 \end{proof}
 
\begin{remark} \label{R:3.10}
If $\Theta$ is taken in the form \eqref{2.5} and the free parameter 
function $\cE \in \cS(\cU, \ell_{2}(\cY))$ is taken to have the form
$\cE(z) = \left[ \begin{smallmatrix} \cE_{0}(z) \\ \cE_{1}(z) \\ 
\vdots \end{smallmatrix} \right]$ where each $\cE$ is in the Schur 
class $\cS(\cU, \cY)$ subject to $\sum_{j=0}^{\infty} \cE_{j}(z)^{*} 
\cE_{j}(z) \le I_{\cU}$ for $z \in {\mathbb D}$,  then the parametrization 
formula \eqref{2.13} can be written more explicitly as 
\begin{align}
S(z)=&\left(\sum_{j=0}^\infty {\displaystyle\frac{ z^j \cE_j(z)}{\sqrt{\gamma_j}}}
+(z-\mu)Ek_\bbeta(z,T)P^{-1}(\mu
I-T^*)^{-1}{\bf R}_\cE(z)\right)\notag\\
&\quad \times \left(1+ (z-\mu)N(I-zT)^{-1}P^{-1}(\mu
I-T^*)^{-1}{\bf R}_\cE(z)\right)^{-1},\label{parametr}
\end{align}
where $\gamma_j$'s are given as in \eqref{defgamma} and where we have set for short
$$
{\bf R}_\cE(z):=\sum_{j=0}^\infty{\displaystyle\frac{1}{\sqrt{\gamma_j}}}
T^{*j}E^*\cE_j(z)-N^*.
$$
To derive \eqref{parametr}, it is enough to observe that 
$$
\Psi_{\bbeta}(z)\cE(z)=\sum_{j=0}^\infty {\displaystyle\frac{ z^j \cE_j(z)}{\sqrt{\gamma_j}}},\qquad
\widetilde{\cO}^*_{\bbeta,E,T}\cE(z)=\sum_{j=0}^\infty{\displaystyle\frac{1}{\sqrt{\gamma_j}}}
T^{*j}E^*\cE_j(z),
$$
and that on account of \eqref{2.7}, \eqref{1.6}, and \eqref{0.5b},
$$
\Psi_{\bbeta}(z)\widetilde{\cO}_{\bbeta,E,T}(z) =\sum_{j=0}^\infty \gamma_j^{-1}ET^j z^j=
E \, k_{\bgamma}(z,T)=Ek_{\bbeta}(z,T)\cdot(I-zT),
$$
so that 
$$
\Psi_{\bbeta}(z)\widetilde{\cO}_{\bbeta,E,T}(z) (I-zT)^{-1}=Ek_{\bbeta}(z,T).
$$
Substituting \eqref{2.5} into \eqref{2.13} and taking into account the latter expressions
we arrive at \eqref{parametr}. 

\smallskip

If we choose $\cE_0$ to be an arbitrary function in $\cS(\cU,\cY)$ and $\cE_j\equiv 0$ for 
$j\ge 1$, we get a family of solutions to the problem {\bf IP} given by the formula 
\begin{align*}
S(z)=&\left(\cE_0(z)
+(z-\mu)Ek_\bbeta(z,T)P^{-1}(\mu
I-T^*)^{-1}\left(E^*\cE_0(z)-N^*\right)\right)\notag\\ 
&\quad \times \left(1+ (z-\mu)N(I-zT)^{-1}P^{-1}(\mu
I-T^*)^{-1}\left(E^*\cE_0(z)-N^*\right)\right)^{-1}.
\end{align*}
\end{remark}
We now illustrate Theorem \ref{T:1.2} and Remark \ref{R:3.10} by a simple example.

\begin{example}  \label{E:3.2}
For a fixed integer $n\ge 1$, let $\bbeta=\left\{\frac{j!(n-1)!}{(n+j-1)!}\right\}_{j\ge 0}$
and let us write $k_n$ (rather than $k_\bbeta$) for the associated kernel \eqref{0.1}.
We thus have 
$$
k_n(z,\overline{\zeta})=\sum_{j=0}^\infty 
\binom{n+j-1}{j}z^j\overline{\zeta}^j=\frac{1}{(1-z\overline{\zeta})^n}
$$
and the associated reproducing kernel Hilbert space  $H^2_{\bbeta}$ coincides with
the standard weighted Bergman space $A^2_n$. Let us find a contractive multiplier
$S$ from $H^2$ to $A^2_n$ satisfying a single interpolation condition:
\begin{equation}
S(3/4)=4/3.
\label{4.1}   
\end{equation}
We have $T=3/4$, $E=1$, $N=4/3$ and consequently,
$$
P_n:=\cG_{n,E,T}-\cG_{1,N,T}=\left(\frac{16}{7}\right)^n-\frac{256}{63}>0\quad \mbox{for every}\quad n\ge 2.
$$
Therefore, the problem \eqref{4.1} has solutions for every $n\ge 2$, which can be described in   
terms of the linear fractional transformation \eqref{2.13} with free parameter
$$
\cE(z)=\{\cE_{k}(z)\}_{k\ge 0}\quad\mbox{such that}\quad {\displaystyle
\sum_{k=0}^\infty|\cE(z)|^2\le 1}\quad\mbox{for all $z\in\D$}.
$$
The numbers $\gamma_j$ defined in \eqref{defgamma} take the form 
$$
\gamma_j=\frac{1}{\binom{n+j-1}{j}-\binom{n+j-2}{j-1}}=\frac{1}{\binom{n+j-2}{j}}
$$
and the kernel $\widetilde k_{\bbeta} = \widetilde k_{n}$ given by 
\eqref{0.5b} becomes simply
$$
 \widetilde k_{\bbeta}(z, \overline{\zeta}) = \widetilde k_{n}(z, \overline{\zeta})  
= (1 -  z \zeta)^{-(n-1)} = k_{n-1}(z, \overline{\zeta}).
 $$
We can choose $\mu=1$ in formula \eqref{parametr} to get
$$
S(z)=\frac{{\displaystyle\sum_{k=0}^\infty\sqrt{\binom{n+k-2}{k}}
\left(z^k+\frac{4(z-1)}{(1-\frac{3}{4}z)^n}P_n^{-1}
\left(\frac{3}{4}\right)^k\right)\cE_k(z)}-
{\displaystyle\frac{16(z-1)}{3(1-\frac{3}{4}z)^n}P_n^{-1}}}
{{\displaystyle\frac{16(z-1)}{3(1-\frac{3}{4}z)}P_n^{-1}\sum_{k=0}^\infty
\sqrt{\binom{n+k-2}{k}}
\left(\frac{3}{4}\right)^k\cE_{k}(z)}-{\displaystyle\frac{64(z-1)}{9(1-\frac{3}{4}z)}P_n^{-1}}+1}.
$$
To get a particular solution $S$ of the problem \eqref{4.1} we may let $\cE=0$ to arrive at 
$$
S(z)=\frac{7^n(1-z)}{(1-\frac{3}{4}z)(3\cdot 16^{n-1}+4\cdot 7^{n-1}-\left(\frac{9}{4}\cdot 16^{n-1}
+\frac{16}{3}\cdot 7^{n-1}\right)z)}.
$$
\end{example}

\begin{remark} \label{R:5.1}
{\rm In case the Pick operator \eqref{Pick} is positive semidefinite but not invertible,
the solution set of the problem {\bf IP} can be still parametrized by a Redheffer-type formula
\begin{equation}
S=\Psi_{\bbeta}\cdot\left(\widetilde{D}+\widetilde{C}\cE(I-\widetilde{A}\cE)^{-1}\widetilde{B}\right)
\label{5.1}
\end{equation}
where $\cE$ and $\begin{bmatrix}\widetilde{A} & \widetilde{B} \\ \widetilde{C} &
\widetilde{D}\end{bmatrix}$ are Schur-class functions with the coefficient spaces depending
on the degeneracy of the Pick operator $P$. Parametrization
\eqref{5.1} follows from the factorization result
in Lemma \ref{T:HBSchur} and known results on the degenerate interpolation problem \eqref{0.12'} for
Schur-class
functions (see \cite{kh, kheyu}).}
\end{remark}

\section{Contractive multipliers between $H^{2}_{\balpha}$ and 
$H^{2}_{\bbeta}$ in case $\balpha \ne {\mathbf 1}$}   \label{S:alpha}

It is natural to consider contractive multipliers between two weighted Hardy spaces
$H^2_\balpha(\cU)$ and $H^2_\bbeta(\cY)$ for given non-increasing weight sequences $\balpha$ and $\bbeta$
(let us denote this class by $\cS_{\balpha\to\bbeta}(\cU,\cY)$). The 
analog of Theorem \ref{T:HBSchur} is the 
following.

\begin{theorem}  
\label{T:HBSchur1}
Let the weight sequences $\balpha$ and $\bbeta$ be non-increasing and let $\Psi_\balpha$ and 
$\Psi_\bbeta$ be the associated operator-valued functions defined as in \eqref{2.7}.
The function $S$ is a contractive multiplier from $H^2_\balpha(\cU)$ to $H^2_\bbeta(\cY)$
if and only if  there is an $\bS$  in the Schur class
$\cS(\ell_{2}(\cU), \ell_{2}(\cY))$ so that
 \begin{equation}  \label{Schur-fact1}
     S(z)\Psi_{\balpha}(z) = \Psi_{\bbeta}(z) \bS(z).
   \end{equation}
\end{theorem}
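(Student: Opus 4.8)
The plan is to mirror the proof of Theorem \ref{T:HBSchur} almost verbatim, the only change being that the multiplier $S$ is now absorbed on the right of $\Psi_\balpha$, and that the scalar identity \eqref{2.7a} must be invoked for \emph{both} weight sequences rather than just for $\bbeta$. The governing object is again the kernel $K_S$ from \eqref{0.5}, now with $\balpha$ in place of $\mathbf 1$: by definition $S$ is a contractive multiplier from $H^2_\balpha(\cU)$ to $H^2_\bbeta(\cY)$ precisely when
\[
K_S(z,\overline{\zeta}) = k_\bbeta(z,\overline{\zeta}) I_\cY - S(z)S(\zeta)^* k_\balpha(z,\overline{\zeta}) \succeq 0.
\]
The structural observation that drives everything is that clearing the denominator $(1-z\overline{\zeta})^{-1}$ hidden in both reproducing kernels turns $K_S$ into a difference of two Gramian-type kernels built from $\Psi_\bbeta$ and $\Psi_\balpha$.

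For the sufficiency direction I would assume \eqref{Schur-fact1} with $\bS \in \cS(\ell_2(\cU),\ell_2(\cY))$, multiply $K_S$ by $(1-z\overline{\zeta})$, and apply \eqref{2.7a} twice: for $\bbeta$ to replace $(1-z\overline{\zeta})k_\bbeta(z,\overline{\zeta})I_\cY$ by $\Psi_\bbeta(z)\Psi_\bbeta(\zeta)^*$, and for $\balpha$ to replace $(1-z\overline{\zeta})k_\balpha(z,\overline{\zeta})I_\cU$ by $\Psi_\balpha(z)\Psi_\balpha(\zeta)^*$ inside the term $S(z)[\,\cdots\,]S(\zeta)^*$. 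Substituting \eqref{Schur-fact1} then gives
\[
(1-z\overline{\zeta})K_S(z,\overline{\zeta}) = \Psi_\bbeta(z)\big[I - \bS(z)\bS(\zeta)^*\big]\Psi_\bbeta(\zeta)^*,
\]
so that $K_S(z,\overline{\zeta}) = \Psi_\bbeta(z)\big[\tfrac{I-\bS(z)\bS(\zeta)^*}{1-z\overline{\zeta}}\big]\Psi_\bbeta(\zeta)^* \succeq 0$ because $\bS$ lies in the Schur class. This is the exact analogue of the first half of the proof of Theorem \ref{T:HBSchur}.

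For necessity I would run the same computation in reverse. Starting from $K_S \succeq 0$ and again invoking \eqref{2.7a} for both $\balpha$ and $\bbeta$, the displayed identity rearranges to
\[
K_S(z,\overline{\zeta}) = \frac{\Psi_\bbeta(z)\Psi_\bbeta(\zeta)^* - \big(S(z)\Psi_\balpha(z)\big)\big(S(\zeta)\Psi_\balpha(\zeta)\big)^*}{1-z\overline{\zeta}},
\]
so the right-hand side is a positive kernel. I would then invoke the theorem of Leech \cite[p.~107]{RR} with outer factor $\Psi_\bbeta$ and inner data $S\Psi_\balpha$, which produces a Schur-class multiplier $\bS \in \cS(\ell_2(\cU),\ell_2(\cY))$ with $S\Psi_\balpha = \Psi_\bbeta\bS$, that is, \eqref{Schur-fact1}.

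The step I expect to be the main obstacle is not a new idea but the bookkeeping around the application of Leech's theorem: one must check that $\Psi_\bbeta$ (an $\cL(\ell_2(\cY),\cY)$-valued function) and $S\Psi_\balpha$ (an $\cL(\ell_2(\cU),\cY)$-valued function) are bounded and analytic on $\D$ so that Leech's theorem applies and delivers $\bS$ with domain $\ell_2(\cU)$ and range $\ell_2(\cY)$, matching the stated coefficient spaces. Pointwise boundedness of $\Psi_\bbeta(z)$ follows from the convergence of $\sum_{j}\gamma_j^{-1}|z|^{2j}=\widetilde{k}_\bbeta(z,\overline z)$ for $z\in\D$ recorded in \eqref{0.5b}, and similarly for $\Psi_\balpha$; combined with the analyticity of the multiplier $S$ this is all that is needed. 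Everything else is a direct transcription of the $\balpha=\mathbf 1$ argument, the twisted placement of $S$ in $S\Psi_\balpha = \Psi_\bbeta\bS$ being the sole genuine departure from Theorem \ref{T:HBSchur}.
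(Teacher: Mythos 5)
Your proposal is correct and is exactly the argument the paper intends: Theorem \ref{T:HBSchur1} is stated without proof as ``the analog of Theorem \ref{T:HBSchur},'' and your transcription of that proof --- using \eqref{2.7a} for both $\balpha$ and $\bbeta$ to rewrite $K_S$ as $\bigl(\Psi_{\bbeta}(z)\Psi_{\bbeta}(\zeta)^*-(S\Psi_{\balpha})(z)(S\Psi_{\balpha})(\zeta)^*\bigr)/(1-z\overline{\zeta})$ and then applying Leech's theorem --- is the natural and correct way to fill it in. The one cosmetic caveat is that in the sufficiency direction the phrase ``multiply $K_S$ by $(1-z\overline{\zeta})$'' should be understood purely as algebraic manipulation of an identity (since $1-z\overline{\zeta}$ is not itself a positive kernel), which is indeed how your final display uses it.
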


We now consider the interpolation problem \textbf{IP} with 
interpolation condition \eqref{0.12} but now with solution $S$ sought 
in the contractive multiplier class $\cS_{\balpha \to \bbeta}(\cU, 
\cY)$.  One can easily see that the interpolation condition 
\eqref{0.12} can equivalently be expressed in the form
\begin{equation}   \label{alpha-beta-int}
    \cO^{*}_{\bbeta,E,T} M_{S} = \cO^{*}_{\balpha, N, T} \colon 
    H^{2}_{\balpha}(\cU) \to \cX
 \end{equation}
where now we view $M_{S}$ as a multiplication operator from 
$H^{2}_{\balpha}(\cU)$ to $H^{2}_{\bbeta}(\cY)$, i.e., the analogue 
of \eqref{1.1u} holds.  It is then easily seen that the condition
\begin{equation}\label{5.3}
 P: = \cG_{\bbeta,E,T} - \cG_{\balpha,N,T}\ge 0.
  \end{equation}
 is a necessary condition for the existence of a solution $S$ of the 
 interpolation condition \eqref{0.12} in the class $\cS_{\balpha \to 
 \bbeta}(\cU, \cY)$.
 However, unlike the situation for the case 
 $\balpha = {\bf 1}$ where the factorization \eqref{Schur-fact1} can 
 be used to reduce the interpolation problem to a solvable 
 interpolation problem for a classical Schur-class function, the 
 condition \eqref{5.3} in general is not sufficient for the existence 
 of $\cS_{\balpha \to \bbeta}(\cU, \cY)$ solutions, as  
 shown by the following example.
 We note also that the Treil-Volberg result \cite{TV} does not cover this 
 case since the shift $S_{\balpha}$ is a nonisometric contraction, 
 and hence not expansive.

\begin{example}   \label{E:alpha}
 Let $k_n(z,\overline{\zeta})=(1-z\overline{\zeta})^{-n}$ be the reproducing kernel of the standard 
weighted Bergman space $A^2_n$. We want to solve the two-point interpolation problem 
$$
S(\pm1/\sqrt{2})=\pm \sqrt{\frac{26}{15}}
$$
in the class of contractive multipliers from $A^2_2$ to $A^2_3$, i.e., for functions $S$ for which 
the associated kernel 
$$
K_S(z,\overline{\zeta})=k_3(z,\overline{\zeta})-k_2(z,\zeta)S(z)\overline{S(\zeta)}
$$
is positive on $\D\times\D$. The necessary condition \eqref{5.3} is satisfied:
$$
P=\begin{bmatrix}K_S(\frac{1}{\sqrt{2}},\frac{1}{\sqrt{2}}) & K_S(\frac{1}{\sqrt{2}},-\frac{1}{\sqrt{2}})\\
 K_S(-\frac{1}{\sqrt{2}},\frac{1}{\sqrt{2}})&  K_S(-\frac{1}{\sqrt{2}},-\frac{1}{\sqrt{2}}) 
\end{bmatrix}=\frac{16}{15}\cdot\begin{bmatrix} 1& 1\\ 1 & 1\end{bmatrix}\ge 0.
$$
Assume such a function exists. Then the kernel 
$$
\begin{bmatrix}K_S(\frac{1}{\sqrt{2}},\frac{1}{\sqrt{2}}) & K_S(\frac{1}{\sqrt{2}},-\frac{1}{\sqrt{2}}) & 
K_S(\frac{1}{\sqrt{2}},\overline{\zeta}) \\
 K_S(-\frac{1}{\sqrt{2}},\frac{1}{\sqrt{2}})&  K_S(-\frac{1}{\sqrt{2}},-\frac{1}{\sqrt{2}}) &  
K_S(-\frac{1}{\sqrt{2}},\overline{\zeta})\\
K_S(z,\frac{1}{\sqrt{2}}) & K_S(z,-\frac{1}{\sqrt{2}}) & K_S(z,\overline{\zeta})\end{bmatrix}
$$
is positive and, since its $2\times 2$ principal submatrix is singular, we conclude that 
$K_S(z,\frac{1}{\sqrt{2}})\equiv  K_S(z,-\frac{1}{\sqrt{2}})$, i.e., that 
$$
k_3\left(z,\frac{1}{\sqrt{2}}\right)-\sqrt{\frac{26}{15}}k_2\left(z,\frac{1}{\sqrt{2}}\right)S(z)=
k_3\left(z,-\frac{1}{\sqrt{2}}\right)+\sqrt{\frac{26}{15}}k_2\left(z,-\frac{1}{\sqrt{2}}\right)S(z).
$$
Solving the latter equality for $S$ gives
$$
S(z)=\sqrt{\frac{15}{13}}\cdot\frac{z(z^2+6)}{4-z^4}.
$$
Let us show that this $S$ is not a contractive multiplier from $A^2_2$ to $A^2_3$. If it were,
then, since
$$
K_S\left(z,\frac{1}{\sqrt{2}}\right)=\left(1-\frac{z}{\sqrt{2}}\right)^{-3}-\left(1-\frac{z}{\sqrt{2}}\right)^{-2}
\cdot\sqrt{\frac{15}{13}}\cdot\sqrt{\frac{26}{15}}\cdot\frac{z(z^2+6)}{4-z^4}=
\frac{4}{4-z^4},
$$
the kernel 
$$
(z,\overline{\zeta})\mapsto \begin{bmatrix}K_S(\frac{1}{\sqrt{2}},\frac{1}{\sqrt{2}}) & 
K_S(\frac{1}{\sqrt{2}},\overline{\zeta}) \\
K_S(z,\frac{1}{\sqrt{2}}) & K_S(z,\overline{\zeta})\end{bmatrix}=
\begin{bmatrix}\frac{16}{15} & \frac{4}{4-\overline{\zeta}^4}\\
\frac{4}{4-z^4} & K_S(z,\overline{\zeta})\end{bmatrix}
$$
would be positive semidefinite as well as the kernel
$$
\widetilde{K}(z,\overline{\zeta})= K_S(z,\overline{\zeta})-\frac{15}{(4-z^4)(4-\overline{\zeta}^4)}.
$$ 
Therefore, the kernel
$$
\begin{bmatrix}\widetilde{K}(0,0) & \widetilde{K}(0,\overline{\zeta})\\
\widetilde{K}(z,0)& \widetilde{K}(z,\overline{\zeta})\end{bmatrix}=
\begin{bmatrix}\frac{1}{16} & \frac{1-4\overline{\zeta}^4}{4-\overline{\zeta}^4}\\
\frac{1-4z^4}{4-z^4} & \widetilde{K}(z,\overline{\zeta})\end{bmatrix}
$$
was positive, as well as the kernel
$$
\widehat{K}(z,\overline{\zeta})=\widetilde{K}(z,\zeta)-\frac{16(1-4z^4)(1-4\overline{\zeta}^4)}{(4-z^4)(4-\overline{\zeta}^4)}
$$
which is not the case since $\widehat{K}(0.1,0.1)=-0.93276$.
\end{example}

\bibliographystyle{amsplain}
\providecommand{\bysame}{\leavevmode\hbox to3em{\hrulefill}\thinspace}

\end{document}